\newtheorem{theorem}{Theorem}[section]
\newtheorem{lemma}[theorem]{Lemma}
\newenvironment{definition}[1][Definition]{\begin{trivlist}
\item[\hskip \labelsep {\bfseries #1}]}{\end{trivlist}}
\begin{document}

\title{A New Family of Bounded Divergence Measures and Application to Signal Detection}

\author{\authorname{Shivakumar Jolad\sup{1}, Ahmed~Roman\sup{2}, Mahesh~C.~Shastry \sup{3}, Mihir~Gadgil\sup{4} and Ayanendranath~ Basu \sup{5}}
\affiliation{\sup{1}Department of Physics, Indian Institute of Technology  Gandhinagar,  Ahmedabad, Gujarat, INDIA}
\affiliation{\sup{2} Department of Mathematics, Virginia Tech , Blacksburg, VA, USA.}
\affiliation{\sup{3} Department of Physics, Indian Institute of Science Education and Research Bhopal, Bhopal, Madhya Pradesh, INDIA.}
\affiliation{\sup{4} Biomedical Engineering Department, Oregon Health \& Science University, Portland, OR, USA.}
\affiliation{\sup{5} Indian Statistical Institute, Kolkata, West Bengal-700108, INDIA}
\email{shiva.jolad@iitgn.ac.in, mido@vt.edu, 	shastry@ohsu.edu, gadgilm@iiserb.ac.in, ayanbasu@isical.ac.in }
}

\keywords{Divergence Measures, Bhattacharyya Distance, Error Probability, F-divergence, Pattern Recognition, Signal Detection, Signal Classification. }

\abstract{We introduce a new one-parameter family of divergence measures, called bounded Bhattacharyya distance (BBD) measures, for quantifying the dissimilarity between probability distributions. These measures are bounded, symmetric and positive semi-definite and do not require absolute continuity. In the asymptotic limit, BBD measure approaches the squared Hellinger distance. A generalized BBD measure for multiple distributions is also introduced. We prove an extension of a theorem of Bradt and Karlin for BBD relating Bayes error probability and divergence ranking. We show that BBD belongs to the class of generalized Csiszar f-divergence and derive some properties such as curvature and relation to Fisher Information. For distributions with vector valued parameters, the curvature matrix is related to the Fisher-Rao metric. We derive certain inequalities between BBD and well known measures such as Hellinger and Jensen-Shannon divergence. We also derive bounds on the Bayesian error probability. We give an application of these measures to the problem of signal detection where we compare two monochromatic signals buried in white noise and differing in frequency and amplitude. }

\onecolumn \maketitle \normalsize \vfill

\section{\uppercase{Introduction}}
\label{sec:introduction}

\noindent Divergence measures for the distance between two probability distributions are a statistical approach to comparing data and have been extensively studied in the last six decades \cite{Kullback1951, ali1966,kapur1984,Kullback1968,KumarKapur1986}.   These measures are widely used in varied fields such as pattern recognition \cite{Basseville1989, Ben1978,Choi2003},  speech recognition \cite{qiao2010study,lee1991information},  signal detection \cite{Kailath1967,kadota1967,poor1994introduction}, Bayesian model validation \cite{tumer1996} and quantum information theory \cite{nielsen2000,Lamberti2008}. Distance measures try to achieve two main objectives (which are not mutually exclusive):  to assess (1) how ``close'' two distributions are compared to others and (2) how ``easy'' it is to distinguish between one pair than the other \cite{ali1966}. 

There is a plethora of distance measures available to assess the convergence (or divergence) of probability distributions. Many of these measures are not metrics in the strict mathematical sense, as they may not satisfy either the symmetry of arguments or the triangle inequality. In applications, the choice of the measure depends on the interpretation of the metric in terms of the problem considered,  its analytical properties and ease of computation \cite{Gibbs2002}. One of the most well-known and widely used divergence measures, the Kullback-Leibler divergence (KLD)\cite{Kullback1951,Kullback1968}, can create problems in specific applications. Specifically, it is unbounded above and requires that the distributions be \emph{absolutely continuous} with respect to each other. Various other information theoretic measures have been introduced keeping in view ease of computation ease and utility in problems of signal selection and pattern recognition.  Of these measures, Bhattacharyya distance \cite{Bhattacharyya1946, 
Kailath1967, Nielsen2011} and Chernoff distance \cite{Chernoff1952,Basseville1989, Nielsen2011} have been widely used in signal processing. However, these measures are again unbounded from above. Many bounded divergence measures such as Variational, Hellinger distance \cite{Basseville1989,Dasgupta2011} and Jensen-Shannon metric \cite{Burbea1982,Rao1982b,Lin1991} have been studied extensively. Utility of these measures vary depending on properties such as tightness of bounds on error probabilities, information theoretic interpretations, and the ability to generalize to multiple probability distributions. 

Here we introduce a new one-parameter ($\alpha$) family of bounded measures based on the Bhattacharyya coefficient, called bounded Bhattacharyya distance (BBD) measures. These measures are symmetric, positive-definite and bounded between 0 and 1. In the asymptotic limit ($\alpha \rightarrow \pm \infty$) they approach squared Hellinger divergence \cite{hellinger1909,kakutani1948}. Following Rao \cite{Rao1982b} and Lin \cite{Lin1991}, a generalized BBD is introduced to capture the divergence (or convergence) between multiple distributions. We show that BBD measures belong to the generalized class of f-divergences and inherit useful properties such as curvature and its relation to Fisher Information.  Bayesian inference is useful in problems where a decision has to be made on classifying an observation into one of the possible array of states, whose prior probabilities are known \cite{Hellman1970,varshney2008quantization}. Divergence measures are useful in estimating the error in such classification \cite{Ben1978,Kailath1967, varshney2011bayes}.  We prove an extension of the Bradt Karlin theorem for BBD, which proves the existence of prior probabilities relating Bayes error probabilities with ranking based on divergence measure. Bounds on the error probabilities $P_e$ can be calculated through BBD measures using certain inequalities  between Bhattacharyya coefficient and $P_e$. We derive two inequalities for a special case of BBD ($\alpha=2$) with Hellinger and Jensen-Shannon divergences. Our bounded measure with $\alpha=2$ has been used by Sunmola \cite{sunmola2013optimising} to calculate distance between Dirichlet distributions in the context of Markov decision process. We illustrate the applicability of BBD measures by focusing on signal detection problem that comes up in areas such as gravitational wave detection \cite{finn1992detection}. Here we consider discriminating two monochromatic signals, differing in frequency or amplitude, and corrupted with additive white noise.  We compare the Fisher Information of the BBD measures with that of KLD and Hellinger distance for these random processes, and highlight the regions where FI is insensitive large parameter deviations. We also characterize the performance of BBD for different signal to noise ratios, providing thresholds for signal separation.

Our paper is organized as follows: Section I is the current introduction. In Section II, we recall the well known Kullback-Leibler and Bhattacharyya divergence measures, and then introduce our bounded Bhattacharyya distance measures. We discuss some special cases of BBD, in particular Hellinger distance. We also introduce the generalized BBD for multiple distributions. In Section III, we show the positive semi-definiteness of BBD measure, applicability of the Bradt Karl theorem and prove that BBD belongs to generalized f-divergence class. We also derive the relation between curvature and Fisher Information,  discuss the curvature metric and prove some inequalities with other measures such as Hellinger and Jensen Shannon divergence for a special case of BBD.  In Section IV, we move on to discuss application to signal detection problem. Here we first briefly describe basic formulation of the problem,  and then move on computing distance between random processes and comparing BBD measure with Fisher Information and KLD.    In the Appendix we provide the expressions for BBD measures , with $\alpha=2$, for some commonly used distributions.  We conclude the paper with summary and outlook.

\section{\uppercase{Divergence measures}}
In the following subsection we consider a measurable space $\Omega$ with $\sigma$-algebra ${\cal B}$  and the set of all probability measures ${\cal M}$ on $(\Omega,{\cal B})$. Let $P$ and $Q$ denote probability measures on $(\Omega,{\cal B})$ with $p$ and $q$ denoting their densities with respect to a common measure $\lambda$. 
We recall the definition of absolute continuity \cite{Royden1986}:
\begin{definition}[Absolute Continuity]
 A measure $P$ on the Borel subsets of the real line is absolutely continuous with respect to Lebesgue measure $Q$, if $P(A)=0$, for every Borel subset $A\in {\cal B}$ for which $Q(A)=0$, and is denoted by $P<<Q$.       
\end{definition}

\subsection{Kullback-Leibler divergence}
The Kullback-Leibler  divergence (KLD) (or relative entropy) \cite{Kullback1951,Kullback1968} between two distributions $P,Q$  with densities $p$ and  $q$ is given by:
\begin{equation}
I(P,Q)\equiv \int p\log\left(\frac{p}{q}\right) d\lambda.  
\end{equation} 
The symmetrized version is given by 
\[
J(P,Q)\equiv(I(P,Q)+I(Q,P))/2
\]
\cite{Kailath1967}, $I(P,Q)\in [0,\infty]$. It diverges if $ \exists
\quad x_0: q(x_0)=0$ and $p(x_0)\ne 0 $. 
 \newline


KLD is defined only when $P$
is absolutely continuous w.r.t. $Q$. This feature can be problematic in numerical computations when the measured distribution has zero values. 

\subsection{Bhattacharyya Distance}
Bhattacharyya distance is a widely used measure in signal selection
and pattern recognition  \cite{Kailath1967}. It is defined as:
\begin{equation}
B(P,Q)\equiv -\ln \left(\int \sqrt{pq} d\lambda\right)=-\ln (\rho), 
\end{equation} 
where the term in parenthesis $\rho(P,Q)\equiv \int \sqrt{pq}
d\lambda$  is called Bhattacharyya coefficient \cite{Bhattacharyya1943, Bhattacharyya1946} in
pattern recognition, affinity in theoretical statistics, and fidelity
in quantum information theory. Unlike in the case of
KLD, the Bhattacharyya distance avoids the requirement of absolute continuity. It is a special case of Chernoff distance 
\[                                                                                                                                                                                                                                                                                      
C_\alpha(P,Q)\equiv -\ln \left(\int p^\alpha(x) q^{1-\alpha}(x) dx\right),                                                                                                                                                                                                                                                                                     \]
with $\alpha=1/2$. For discrete probability distributions, $\rho\in
[0,1]$ is interpreted as a scalar product of the probability vectors
$\mathbf{P}=(\sqrt{p_1},\sqrt{p_2},\dots,\sqrt{p_n})$ and
$\mathbf{Q}=(\sqrt{q_1},\sqrt{q_2},\dots,\sqrt{q_n})$. Bhattacharyya
distance is symmetric, positive-semidefinite, and unbounded ($0\le B\le \infty$).  It is finite as long as there exists some region $S\subset X$ such that whenever $x\in S :p(x)q(x)\ne 0$. 

\subsection{Bounded Bhattacharyya Distance Measures }
In many applications, in addition to the desirable properties of the
Bhattacharyya distance, boundedness is required.  We propose a new family of bounded measure of Bhattacharyya distance as below,
\begin{equation}
B_{\psi,b}(P,Q)\equiv -\log_b(\psi(\rho))
\end{equation} 
where, $\rho=\rho(P,Q)$ is the Bhattacharyya coefficient, $\psi_b(\rho)$ satisfies $\psi(0)=b^{-1}$ , $\psi(1)=1$.
In particular we choose the following form :
 \begin{eqnarray}
  \psi(\rho)&=&\left[1-\frac{(1-\rho)}{\alpha}\right]^\alpha \nonumber \\
  b&=&\left(\frac{\alpha}{\alpha-1}\right)^\alpha,
 \end{eqnarray}
 where $\alpha \in [-\infty,0) \cup (1,\infty] $. This gives the measure 
   \begin{equation}
  B_\alpha(\rho(P,Q))\equiv-\log_{\left(1-\frac{1}{\alpha}\right)^{-\alpha}} \left[1-\frac{(1-\rho)}{\alpha}\right]^\alpha, 
  \label{BBDFormn}
    \end{equation}
which can be simplified to
   \begin{equation}
  B_\alpha(\rho)=\frac{\log \left[1-\frac{(1-\rho)}{\alpha}\right] }{\log \left[1-\frac{1}{\alpha}\right] } .
  \label{BBDDefn}
    \end{equation}
 It is easy to see that $B_\alpha(0)=1,~B_\alpha(1)=0$. 

\subsection{Special cases}  
   \begin{enumerate}
     \item For $\alpha=2$ we get,  
\begin{equation}
  	  B_{2}(\rho)=-\log_{2^2}\left[\frac{1+\rho}{2}\right]^2 =-\log_2\left(\frac{1+\rho}{2}\right) .
  	\end{equation}
We study some of its special properties in Sec.\ref{Ineq}.
  	\item $\alpha\rightarrow \infty $ 	
	\begin{equation}
 	  B_{\infty}(\rho)=-\log_{e}e^{-(1-\rho)}=1-\rho=H^2(\rho),
 	\end{equation}
where $H(\rho)$ is the Hellinger distance \cite{Basseville1989,Kailath1967,hellinger1909,kakutani1948}  
 \begin{equation}
  H(\rho)\equiv \sqrt{1-\rho(P,Q)}.
\label{Hellinger}
 \end{equation} 
 	\item $\alpha=-1$
  	\begin{equation}
  	 B_{-1}(\rho)=-\log_2 \left(\frac{1}{2-\rho}\right)  .	 
  	\end{equation} 
  	\item $\alpha\rightarrow -\infty$
  	\begin{equation}
  	B_{-\infty}(\rho)=  \log_{e} e^{(1-\rho)}=1-\rho=H^2(\rho) . 
  	\end{equation} 
  \end{enumerate}
We note that BBD measures approach squared Hellinger distance when $\alpha \rightarrow \pm \infty $. In general, they are convex (concave) when $\alpha>1$ ($\alpha<0$) in $\rho$, as seen by evaluating second derivative 
\begin{eqnarray}
\frac{\partial^2 B_\alpha (\rho) }{\partial \rho^2}  &=&\frac{-1}{\alpha^2 \log\left(1-\frac{1}{\alpha}\right) \left(1-\frac{1-\rho}{\alpha}\right)^2  }= 
\nonumber \\
&=&
\begin{cases}
   > 0 & ~\alpha >1 \\
   < 0 & ~\alpha< 0 ~~.                                                                                                   
 \end{cases} 
\end{eqnarray} 
From this we deduce $B_{\alpha>1}(\rho)\le H^2(\rho) \le B_{\alpha<0}(\rho) $ for $\rho\in[0,1]$. A comparison between Hellinger and BBD measures for different values of $\alpha$ are shown in Fig. \ref{fig:BBDCompare}.

\begin{figure}[!t]
\centering
\fbox{\includegraphics[scale=0.4]{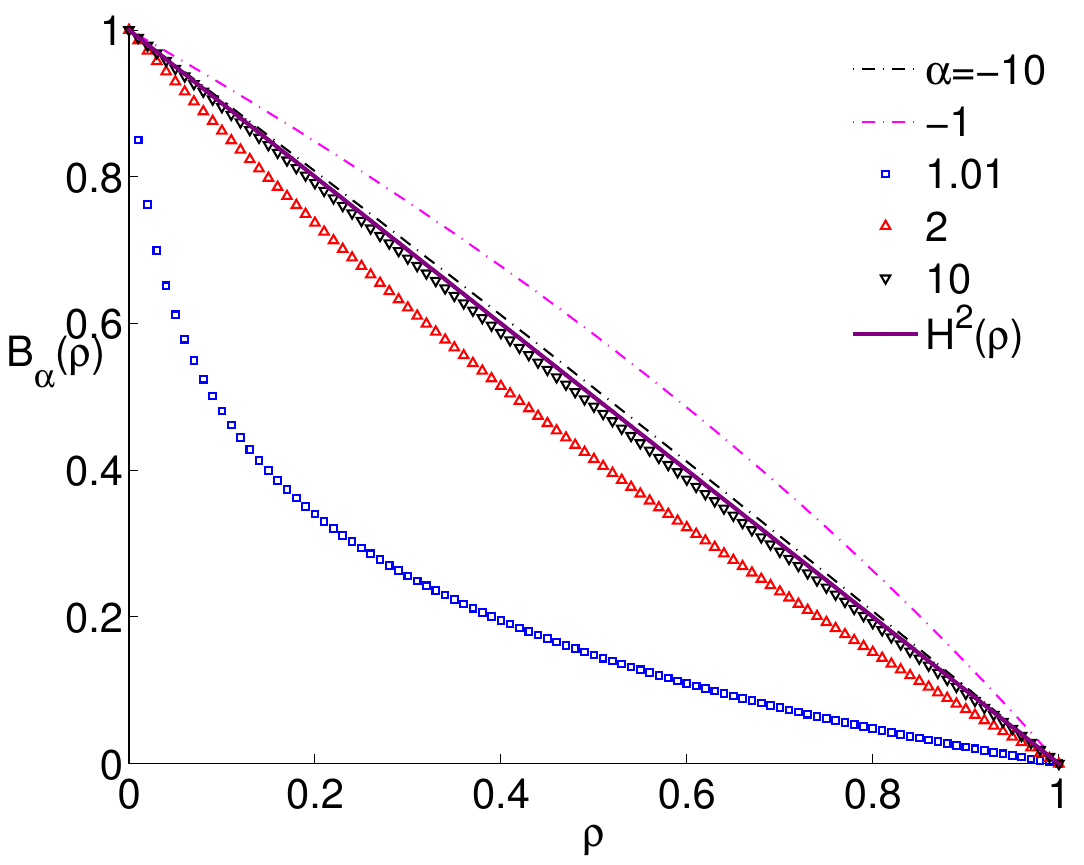}}
\caption{[Color Online] Comparison of Hellinger and  bounded Bhattacharyya distance measures for different values of $\alpha$.}
\label{fig:BBDCompare}
\end{figure}

\subsection{Generalized BBD measure}
In decision problems involving more than two random variables, it is very useful to have divergence measures involving more than two distributions \cite{Lin1991,Rao1982a,Rao1982b}.  
We use the generalized geometric mean ($G$) concept to define bounded Bhattacharyya measure for more than two distributions.   
The $G_{\boldsymbol\beta}(\{p_i\})$ of $n$ variables $p_1,p_2,\dots, p_n$ with weights $\beta_1,\beta_2,\dots, \beta_n$, such that $\beta_i\ge 0, ~\sum_i \beta_i=1$, is given by
\[
G_{\boldsymbol\beta}(\{p_i\})= \prod_{i=1}^{n} p_{i}^{\beta_{i}}. 
\]
For $n$ probability distributions $P_1,P_2,\dots,P_n~$, with densities $p_1,p_2,\dots,p_n~$, we define a generalized Bhattacharyya coefficient, also called Matusita measure of affinity \cite{matusita1967notion,toussaint1974some}: 
\begin{equation} 
\rho_{\boldsymbol\beta}(P_1,P_2,\dots , P_n)=\int_\Omega \prod_{i=1}^{n} p_{i}^{\beta_{i}} d\lambda .
\end{equation}
where $\beta_i\ge 0, ~\sum_i \beta_i=1$. Based on this, we define the generalized bounded Bhattacharyya measures as:
\begin{equation}
B_{\alpha}^{\boldsymbol \beta}(\rho_{\boldsymbol\beta}(P_1,P_2,\dots, P_n)) \equiv \frac{\log(1-\frac{1-\rho_{\boldsymbol\beta}}{\alpha})}{\log(1-1/\alpha)}
\label{GenBBD}
\end{equation}
where $\alpha \in [-\infty,0) \cup (1,\infty] $. For brevity we denote it as $B_{\alpha}^{\boldsymbol \beta}(\rho)$.
Note that, $0 \le \rho_{\boldsymbol\beta}\le 1$ and $ 0\leq  B_{\alpha}^{\boldsymbol \beta}(\rho) \leq 1 $, since the weighted geometric mean is maximized when all the $p_{i}$'s are the same, and minimized when any two of the probability densities $p_{i}$'s are perpendicular to each other. 

\section{\uppercase{Properties}}
\subsection{Symmetry, Boundedness and Positive Semi-definiteness}
\begin{theorem}
  $B_\alpha( \rho(P,Q))$  is symmetric, positive semi-definite and bounded in the interval $[0,1]$ for $\alpha \in [-\infty,0) \cup (1,\infty] $. 
\label{PosDefinite}
\end{theorem}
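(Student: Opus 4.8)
The plan is to reduce the entire statement to elementary properties of the scalar function $\rho\mapsto B_\alpha(\rho)$ of Eq.~(\ref{BBDDefn}) together with the range of the Bhattacharyya coefficient. Symmetry is immediate: since $\rho(P,Q)=\int\sqrt{pq}\,d\lambda=\rho(Q,P)$ and $B_\alpha$ depends on $P,Q$ only through $\rho$, we get $B_\alpha(P,Q)=B_\alpha(Q,P)$. Next I would pin down $\rho\in[0,1]$: nonnegativity is clear, and by the Cauchy--Schwarz inequality (equivalently H\"older with exponents $2,2$), $\int\sqrt{pq}\,d\lambda\le\bigl(\int p\,d\lambda\bigr)^{1/2}\bigl(\int q\,d\lambda\bigr)^{1/2}=1$, with equality iff $p=q$ $\lambda$-a.e. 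So it suffices to show $B_\alpha$ maps $[0,1]$ into $[0,1]$, and that $B_\alpha=0$ forces $\rho=1$.

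The core step is to show $\rho\mapsto B_\alpha(\rho)$ is strictly decreasing on $[0,1]$ for every admissible finite $\alpha$. I would differentiate Eq.~(\ref{BBDDefn}),
\[
\frac{\partial B_\alpha}{\partial \rho}=\frac{1/\alpha}{\left(1-\frac{1-\rho}{\alpha}\right)\log\!\left(1-\frac{1}{\alpha}\right)},
\]
and check the sign in the two disjoint regimes. For $\alpha>1$ we have $1-\tfrac1\alpha\in(0,1)$ so $\log(1-\tfrac1\alpha)<0$, while $1-\tfrac{1-\rho}{\alpha}\in[1-\tfrac1\alpha,1]>0$ for $\rho\in[0,1]$; hence the derivative is $<0$. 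For $\alpha<0$, writing $\alpha=-|\alpha|$, we have $\log(1-\tfrac1\alpha)=\log(1+\tfrac1{|\alpha|})>0$ and $1-\tfrac{1-\rho}{\alpha}=1+\tfrac{1-\rho}{|\alpha|}>0$, while $1/\alpha<0$, so again the derivative is $<0$. Since $B_\alpha(0)=1$ and $B_\alpha(1)=0$ by direct substitution into Eq.~(\ref{BBDDefn}), strict monotonicity gives $B_\alpha(\rho)\in[0,1]$ for all $\rho\in[0,1]$. This yields boundedness and positive semi-definiteness at once; moreover $B_\alpha(\rho)=0\iff\rho=1\iff P=Q$ $\lambda$-a.e., so $B_\alpha$ is in fact positive definite.

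Finally, the endpoint cases $\alpha=\pm\infty$ are handled using the limits already recorded in Section~II, namely $B_{\pm\infty}(\rho)=1-\rho=H^2(\rho)$, which clearly lies in $[0,1]$ and vanishes precisely when $\rho=1$. I do not expect a substantive obstacle here: the only point requiring care is the bookkeeping of signs of $\log(1-\tfrac1\alpha)$ and of the argument $1-\tfrac{1-\rho}{\alpha}$ across the two separate ranges of $\alpha$ — in particular verifying that the argument of every logarithm stays strictly positive on $\rho\in[0,1]$ — which is exactly what makes the clean observation "numerator and denominator of $B_\alpha$ always share the same sign, and the numerator never exceeds the denominator in magnitude" valid.
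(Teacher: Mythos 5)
Your proposal is correct and follows essentially the same route as the paper: symmetry via $\rho(P,Q)=\rho(Q,P)$, and boundedness/positive semi-definiteness from the endpoint values $B_\alpha(0)=1$, $B_\alpha(1)=0$ together with the negativity of $\partial B_\alpha/\partial\rho$ on $[0,1]$ for the admissible $\alpha$. Your additions (verifying $\rho\in[0,1]$ by Cauchy--Schwarz, the explicit sign bookkeeping in the two $\alpha$ regimes, and the $\alpha=\pm\infty$ limit) merely make explicit details the paper leaves implicit.
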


\begin{proof}[Proof]
Symmetry: Since   $\rho(P,Q)=\rho(Q,P)$, it follows that 
\[ 
B_\alpha(\rho(P,Q))=B_\alpha(\rho(Q,P)). 
\]
Positive-semidefinite and boundedness: Since $B_\alpha(0)=1$, $B_\alpha(1)=0$ and 
\[ 
\frac{\partial B_\alpha(\rho)}{\partial \rho}=\frac{1}{\alpha \log (1-1/\alpha) ~ [1-(1-\rho)/\alpha]}<0  
\]
for $0\le \rho \le 1$ and $\alpha \in [-\infty,0) \cup (1,\infty] $, it follows that
 \begin{equation}
       0\leq   B_\alpha(\rho) \leq 1.
      \end{equation} 
\end{proof}
      

\subsection{Error Probability and Divergence Ranking}
Here we recap the definition of error probability and prove the applicability of  Bradt and Karlin \cite{bradt1956} theorem to BBD measure.
\begin{definition}[Error probability:]
The optimal Bayes error probabilities (see eg: \cite{Ben1978, Hellman1970, toussaint1978probability}) for classifying two events $P_1,P_2$ with densities $p_1(x)$ and $p_2(x)$ with prior probabilities $\Gamma=\{\pi_1,\pi_2\}$ is given by 
\begin{equation}
 P_e=\int \min[\pi_1p_1(x),\pi_2 p_2(x)]dx.
\end{equation} 
\end{definition}

{\bf Error comparison}: Let $p_i^\beta(x)$ ($i=1,2$) be parameterized by $\beta$ (Eg: in case of Normal distribution $\beta=\{\mu_1, \sigma_1; \mu_2, \sigma_2 \}$ ). In signal detection literature,  a signal set $\beta$ is considered better than set $\beta'$ for the densities $p_i(x)$ , when the error probability is less for  $\beta$ than for $\beta'$ (i.e. $P_e(\beta) < P_e(\beta')$) \cite{Kailath1967}. \\

{\bf Divergence ranking}: We can also rank the parameters by means of some  divergence $D$. The signal set $\beta$ is better (in the divergence sense) than $\beta'$, if $D_\beta(P_1,P_2)> D_{\beta'}(P_1,P_2)$. \\ 

In general it is {\it not} true that  $D_\beta(P_1,P_2)>D_{\beta'}(P_1,P_2) \implies P_e({\beta})<P_e(\beta')$. Bradt and Karlin  proved the following theorem relating error probabilities and divergence ranking for \emph{symmetric} Kullback Leibler divergence $\mathbf{J}$:
\begin{theorem}[Bradt and Karlin \cite{bradt1956}]
\label{BradtKarlin}
 If  $J_\beta(P_1,P_2)>J_{\beta'}(P_1,P_2)$, then $\exists$ a set of prior probabilities $\Gamma=\{\pi_1, \pi_2\}$ for two hypothesis
$g_1,g_2$, for which
\begin{equation}
P_e(\beta,\Gamma)<P_e(\beta',\Gamma) 
\end{equation} 
where $P_e(\beta,\Gamma)$ is the error probability with parameter $\beta$ and prior probability $\Gamma$.
\end{theorem}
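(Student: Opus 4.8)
The plan is to establish Theorem~\ref{BradtKarlin} in contrapositive form: assuming that the signal set $\beta$ is \emph{not} strictly better than $\beta'$ under \emph{any} prior, i.e. $P_e(\beta,\Gamma)\ge P_e(\beta',\Gamma)$ for every $\Gamma=\{\pi,1-\pi\}$ with $\pi\in[0,1]$, I will deduce $J_\beta(P_1,P_2)\le J_{\beta'}(P_1,P_2)$, contradicting the hypothesis $J_\beta>J_{\beta'}$. Write $P_e(\beta,\pi)$ for the Bayes error at prior $(\pi,1-\pi)$, and for a threshold $t>0$ introduce the ``hockey-stick'' functional $S_\beta(t)\equiv\int_\Omega (p_1-t\,p_2)_+\,d\lambda$ (all densities carrying the parameter $\beta$); note $0\le S_\beta(t)\le 1$. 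Starting from the elementary identity $\min(a,b)=a-(a-b)_+$, integrating, and rescaling the positive part by $\pi>0$, one gets $P_e(\beta,\pi)=\pi\big(1-S_\beta(\tfrac{1-\pi}{\pi})\big)$. Since $\pi\mapsto(1-\pi)/\pi$ is a decreasing bijection of $(0,1)$ onto $(0,\infty)$ and the prefactor $\pi$ is positive, the assumption ``$P_e(\beta,\pi)\ge P_e(\beta',\pi)$ for all $\pi$'' is \emph{equivalent} to ``$S_\beta(t)\le S_{\beta'}(t)$ for all $t>0$'': knowing the Bayes error at every prior is the same information as knowing every hockey-stick functional.

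The second and central ingredient is an integral representation of the symmetrised Kullback--Leibler divergence as a nonnegatively weighted mixture of the $S_\beta(t)$. Since $2J_\beta=I(P_1,P_2)+I(P_2,P_1)=\int_\Omega p_2\,g(p_1/p_2)\,d\lambda$ with $g(u)=(u-1)\log u$ convex on $(0,\infty)$ and $g(1)=g'(1)=0$, I would use the Taylor representation with integral remainder, $g(u)=\int_0^\infty \phi_t(u)\,g''(t)\,dt$, where $\phi_t(u)=(u-t)_+-(1-t)_+-(u-1)\mathbf 1_{\{t<1\}}\ge 0$ and $g''(t)=\tfrac1t+\tfrac1{t^2}\ge 0$; substituting $u=p_1/p_2$, integrating in $\lambda$, invoking Tonelli (legitimate since $\phi_t\ge 0$) and using $\int(p_1-t p_2)\,d\lambda=1-t$, one obtains
\[
2J_\beta(P_1,P_2)=\int_0^\infty\big[S_\beta(t)-(1-t)_+\big]\Big(\tfrac1t+\tfrac1{t^2}\Big)\,dt ,
\]
an equality valid in $[0,\infty]$, whose integrand is pointwise nonnegative because $S_\beta(t)=\int(p_1-tp_2)_+\,d\lambda\ge\int(p_1-tp_2)\,d\lambda=1-t$. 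Granting both ingredients, the argument closes at once: $S_\beta\le S_{\beta'}$ on $(0,\infty)$ forces $S_\beta(t)-(1-t)_+\le S_{\beta'}(t)-(1-t)_+$ with both sides nonnegative, and integrating against the nonnegative kernel $(\tfrac1t+\tfrac1{t^2})\,dt$ yields $J_\beta\le J_{\beta'}$ — the required contradiction. (Equivalently, one could phrase the representation directly in terms of the curvature measure $-\partial_\pi^2 P_e(\beta,\pi)$ on $(0,1)$ against the nonnegative kernel $(1-2\pi)\log\frac{1-\pi}{\pi}$; the hockey-stick form is preferable only because every integrand stays manifestly nonnegative.)

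The hard part will be the measure-theoretic bookkeeping rather than the algebra: one must verify the representation of $g$ and the Tonelli interchange carefully, including at the points where $p_2=0$ (there $g(p_1/p_2)=+\infty$, matching the $t\to\infty$ divergence of $\int S_\beta\,\omega$) or $p_1=0$ (the $t\to 0$ end), so that the displayed identity is read correctly as an equality in $[0,\infty]$ without assuming mutual absolute continuity of $P_1$ and $P_2$. The only other point requiring care is the claimed equivalence between domination of $P_e$ at every prior and domination of all hockey-stick functionals, which is exactly where the change of variables $t=(1-\pi)/\pi$ enters. Finally, one should observe that if $J_{\beta'}=\infty$ the hypothesis $J_\beta>J_{\beta'}$ is vacuous, while if $J_{\beta'}<\infty$ the chain $J_\beta\le J_{\beta'}<\infty$ gives the contradiction outright; essentially the same scheme — replacing $g$ by the convex generator of the BBD $f$-divergence identified later in the paper — will yield the announced extension of the theorem to $B_\alpha$.
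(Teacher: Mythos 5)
The paper itself gives no proof of this statement: Theorem~\ref{BradtKarlin} is quoted from Bradt and Karlin \cite{bradt1956}, and the paper's own work is the analogue for $B_\alpha$, proved by assuming $P_e(\beta',\Gamma)\le P_e(\beta,\Gamma)$ for every prior and invoking Blackwell's comparison theorem (Theorem~\ref{Blackwell}) with the concave function $\Phi(L)=\sqrt{L}$, following Kailath. Your argument is, as far as I can check, a correct and self-contained proof of the Bradt--Karlin statement by a genuinely different route: you keep the same contrapositive shell, but instead of citing Blackwell you observe that domination of the Bayes error at every prior, rewritten via $P_e(\beta,\pi)=\pi\bigl(1-S_\beta(\tfrac{1-\pi}{\pi})\bigr)$, is exactly pointwise domination of the hockey-stick functionals $S_\beta(t)$, and that this forces $J_\beta\le J_{\beta'}$ because of the mixture representation $2J_\beta=\int_0^\infty\bigl[S_\beta(t)-(1-t)_+\bigr]\bigl(\tfrac1t+\tfrac1{t^2}\bigr)dt$ coming from the integral-remainder Taylor formula for $g(u)=(u-1)\log u$. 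The key identities do check out: your $\phi_t(u)$ reduces to $(u-t)_+$ for $t\ge 1$ and $(t-u)_+$ for $t<1$, the inner integral equals $S_\beta(t)-(1-t)_+\ge 0$ in both regimes, and the identity holds in $[0,\infty]$ even without mutual absolute continuity (a set with $p_2=0<p_1$ makes the $t\to\infty$ tail diverge, matching $J=\infty$; similarly for $p_1=0$ at the $t\to 0$ end), so Tonelli and the monotone comparison are legitimate. The trade-off: the Blackwell/Kailath route used in the paper is shorter and immediately covers any divergence of the form $\mathbb{E}[\Phi(L)]$ with $\Phi$ concave (which is how the paper gets the $\rho$-based extension), but it uses Blackwell's theorem as a black box; your route is an elementary, transparent proof for $J$ specifically --- in effect a hand-made proof of the needed half of Blackwell's theorem via the representation of an $f$-divergence as a nonnegative mixture of Bayes errors --- and, as you note, replacing $g$ by the convex generator of the BBD $f$-divergence (whose curvature is likewise nonnegative) reproduces the paper's extension theorem without invoking Blackwell at all. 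The only caution is that your text is a plan rather than a finished write-up, so the deferred bookkeeping (the pointwise validity of the representation on the singular sets and the Tonelli step) must be written out, but none of it hides a gap.
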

It is clear that the theorem asserts existence, but no method of finding these prior probabilities. Kailath \cite{Kailath1967} proved the applicability of the Bradt Karlin Theorem for Bhattacharyya distance measure. 
We follow the same route and show that the $B_{\alpha}(\rho)$ measure satisfies a similar property using the following theorem by Blackwell.
\begin{theorem}[Blackwell \cite{blackwell1951}]
\label{Blackwell}
$P_{e}({\beta'},\Gamma) \leq  P_{e}(\beta,\Gamma) $ for all  prior probabilities $\Gamma$ if and only if 
\[
 \mathbb{E}_{{\beta'} }[ \Phi (L_{{\beta'} })|g] \leq   \mathbb{E}_{\beta}[ \Phi (L_{\beta })|g] , 
\]
$\forall$  continuous concave functions $\Phi(L)$, where $L_\beta=p_1(x,\beta)/p_2(x,\beta)$ is the likelihood ratio and $ \mathbb{E}_{\omega} [ \Phi (L_{\omega})|g]$ is the expectation of $\Phi(L_{\omega})$ under the hypothesis $g=P_2$.
\end{theorem}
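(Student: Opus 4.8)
The plan is to prove the equivalence in both directions by exploiting the elementary fact that the Bayes error probability is itself the $g^{(2)}$-expectation of a particular concave function of the likelihood ratio. Writing $L_\omega = p_1(x,\omega)/p_2(x,\omega)$ and using the pointwise identity $\min[\pi_1 p_1,\pi_2 p_2] = p_2\,\min[\pi_1 L_\omega,\pi_2]$, one obtains
\[
P_e(\omega,\Gamma)=\int p_2(x,\omega)\,\min[\pi_1 L_\omega,\pi_2]\,dx=\mathbb{E}_\omega\!\left[\Phi_\Gamma(L_\omega)|g^{(2)}\right],
\]
where $\Phi_\Gamma(L)=\min(\pi_1 L,\pi_2)$ is, being the lower envelope of two affine functions, continuous and concave on $[0,\infty)$.

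The ``if'' direction is then immediate: if the stated inequality holds for every continuous concave $\Phi$, apply it to $\Phi=\Phi_\Gamma$ for each prior $\Gamma$ to get $P_e(\beta',\Gamma)\le P_e(\beta,\Gamma)$, and since $\Gamma$ is arbitrary this is the assertion.

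For the ``only if'' direction the key point is that the one-parameter family $\{\Phi_\Gamma\}$ --- equivalently, after factoring out the positive constant $\pi_1$, the ``angle functions'' $L\mapsto\min(L,c)$ for $c>0$ --- generates all continuous concave functions. I would use two ingredients. First, a normalization identity: $\mathbb{E}_\omega[L_\omega|g^{(2)}]=\int p_1(x,\omega)\,dx=1$ for $\omega\in\{\beta,\beta'\}$, so that the affine part of any $\Phi$ contributes equally to the two sides and only the curvature matters. Second, the integral representation of a concave function, $\Phi(L)=aL+b+\int_0^\infty \min(L,c)\,d\mu(c)$ with $\mu=-\Phi''\ge 0$ a nonnegative measure. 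Granting these, assume $P_e(\beta',\Gamma)\le P_e(\beta,\Gamma)$ for all $\Gamma$; dividing by $\pi_1>0$ gives $\mathbb{E}_{\beta'}[\min(L_{\beta'},c)|g^{(2)}]\le\mathbb{E}_{\beta}[\min(L_{\beta},c)|g^{(2)}]$ for every $c>0$; integrating this inequality against $d\mu(c)\ge 0$ and adding the equal affine contributions (interchanging integrals by Tonelli) yields $\mathbb{E}_{\beta'}[\Phi(L_{\beta'})|g^{(2)}]\le\mathbb{E}_{\beta}[\Phi(L_{\beta})|g^{(2)}]$, as required.

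The main obstacle is making the second ingredient rigorous: a general continuous concave $\Phi$ need not be twice differentiable, and when $\mu$ has infinite mass or $\Phi$ is unbounded the representation and the interchange of integration must be justified by truncation --- approximating $\Phi$ from one side by the piecewise-linear concave functions interpolating it on a grid of $[0,n]$, establishing the inequality for each, and passing to the limit by monotone or dominated convergence, with care taken over the $g^{(2)}$-null set where $L_\omega=\infty$. An alternative, more geometric route, closer to Blackwell's original argument, is to observe that the set of attainable pairs of type-I and type-II error probabilities is convex and that $P_e$-dominance for all priors says the lower boundary of one set lies below that of the other, after which a supporting-hyperplane argument delivers the concave-function comparison; I would fall back on this only if the representation approach becomes cumbersome. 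In the write-up I would present the concave-function representation as the conceptual skeleton and cite \cite{blackwell1951} for the full technical closure.
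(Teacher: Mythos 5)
The paper does not prove this statement at all: it is quoted as Blackwell's theorem and used as a black box (the citation to \cite{blackwell1951}, via Kailath's treatment), so there is no in-paper proof to compare against; your proposal has to stand on its own. On its own terms, your reduction $P_e(\omega,\Gamma)=\mathbb{E}_\omega[\min(\pi_1 L_\omega,\pi_2)\,|\,g^{(2)}]$ is correct, and the ``if'' direction is complete and exactly right: $\Phi_\Gamma(L)=\min(\pi_1 L,\pi_2)$ is continuous and concave, so dominance over all continuous concave $\Phi$ specializes to dominance of error probabilities for every prior. This is in fact the only direction the surrounding argument in the paper actually needs (the subsequent theorem argues by contradiction from ``$P_e(\beta',\Gamma)\le P_e(\beta,\Gamma)$ for all $\Gamma$'' to an expectation inequality for the concave function $\sqrt{L}$), so it is worth being aware that the substantive half for this application is the one you prove cleanly.

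For the ``only if'' direction your strategy --- represent a concave $\Phi$ as an affine part plus a nonnegative mixture of the angle functions $L\mapsto\min(L,c)$, note that $P_e$-dominance for all priors gives the inequality for each $\min(\cdot,c)$, and integrate --- is the standard and correct skeleton, but two points need more care than you give them. First, the normalization $\mathbb{E}_\omega[L_\omega\,|\,g^{(2)}]=\int p_1(x,\omega)\,dx=1$ holds only when $P_1\ll P_2$ for the signal set $\omega$; in general $\mathbb{E}_\omega[L_\omega\,|\,g^{(2)}]=1-P_1(\{p_2=0\})$, and if this mass differs between $\beta$ and $\beta'$ the affine terms do \emph{not} cancel --- taking $\Phi(L)=-L$ would then reverse the inequality, so the equivalence as stated implicitly carries this absolute-continuity convention, which you should state rather than bury in the remark about the $g^{(2)}$-null set where $L_\omega=\infty$. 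Second, the claim that every continuous concave function on $[0,\infty)$ admits the representation with $\mu=-\Phi''\ge 0$ and that the interchange of integrals goes through is precisely the technical content you defer to truncation and ultimately to the citation; as written the hard half of the equivalence is an outline with acknowledged gaps, not a proof. Given that the paper itself treats the whole theorem as a cited result, that is a reasonable division of labor, but you should present it as such.
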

\begin{theorem}
\label{KBExtend}
 If $B_\alpha(\rho ({\beta}))>B_\alpha(\rho ({\beta'}))$, or equivalently $\rho(\beta)<\rho({\beta'})$ then $\exists$ a set of prior probabilities $\Gamma=\{\pi_1, \pi_2\}$ for two hypothesis
$g_1,g_2$, for which
\begin{equation}
P_e(\beta,\Gamma)<P_e(\beta',\Gamma). 
\end{equation} 
\end{theorem}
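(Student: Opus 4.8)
The plan is to follow the route Kailath used for the ordinary Bhattacharyya distance: reduce the assertion to Blackwell's theorem (Theorem~\ref{Blackwell}) by contraposition. As a preliminary reduction, recall from the proof of Theorem~\ref{PosDefinite} that $B_\alpha(\rho)$ is strictly decreasing in $\rho$ on $[0,1]$ for $\alpha\in[-\infty,0)\cup(1,\infty]$. Hence the hypothesis $B_\alpha(\rho(\beta))>B_\alpha(\rho(\beta'))$ is indeed equivalent to $\rho(\beta)<\rho(\beta')$, and it suffices to show that $\rho(\beta)<\rho(\beta')$ forces the existence of a prior $\Gamma=\{\pi_1,\pi_2\}$ with $P_e(\beta,\Gamma)<P_e(\beta',\Gamma)$.

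The argument I would give is by contradiction. Suppose no such $\Gamma$ exists; then $P_e(\beta',\Gamma)\le P_e(\beta,\Gamma)$ for \emph{every} prior $\Gamma$. Applying Blackwell's theorem with $\beta'$ playing the role of the uniformly better parameter, this is equivalent to
\[
\mathbb{E}_{\beta'}[\Phi(L_{\beta'})|g^{(2)}]\;\le\;\mathbb{E}_{\beta}[\Phi(L_{\beta})|g^{(2)}]
\]
for all continuous concave functions $\Phi$, where $L_\omega=p_1(x,\omega)/p_2(x,\omega)$.

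The crucial step is then the choice $\Phi(L)=\sqrt{L}$, which is continuous and concave on $[0,\infty)$. For $\omega\in\{\beta,\beta'\}$ one computes
\[
\mathbb{E}_{\omega}[\sqrt{L_{\omega}}|g^{(2)}]=\int\sqrt{\frac{p_1(x,\omega)}{p_2(x,\omega)}}\,p_2(x,\omega)\,dx=\int\sqrt{p_1(x,\omega)\,p_2(x,\omega)}\,dx=\rho(\omega),
\]
the integral being taken over the set where $p_2>0$, which is harmless since the integrand vanishes off it (and, exactly as for the Bhattacharyya distance itself, no absolute-continuity assumption is required). Substituting back yields $\rho(\beta')\le\rho(\beta)$, contradicting $\rho(\beta)<\rho(\beta')$. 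Therefore a prior $\Gamma$ with $P_e(\beta,\Gamma)<P_e(\beta',\Gamma)$ must exist, proving the theorem.

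I expect the only delicate points to be bookkeeping ones: keeping the inequality directions in Blackwell's theorem consistently aligned with the negation of the desired conclusion, and verifying that $\Phi(L)=\sqrt{L}$ both satisfies the hypotheses of Theorem~\ref{Blackwell} and produces exactly $\rho$ as its conditional expectation under $g^{(2)}$. As in the original Bradt--Karlin statement (Theorem~\ref{BradtKarlin}), the result is purely existential and gives no constructive recipe for the prior $\Gamma$.
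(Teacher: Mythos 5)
Your proposal is correct and follows essentially the same route as the paper's own proof: argue by contradiction, invoke Blackwell's theorem, and apply it with the concave function $\Phi(L)=\sqrt{L}$, whose conditional expectation under $g^{(2)}$ is exactly the Bhattacharyya coefficient $\rho$, contradicting $\rho(\beta)<\rho(\beta')$. The paper's version (which itself follows Kailath) presents the same steps, only stated with the expectation identity computed first and the contradiction drawn at the end.
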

\begin{proof} 
The proof closely follows Kailath \cite{Kailath1967}. First note that $\sqrt{L}$ is a concave function of $L$ (likelihood ratio) , and 
\begin{eqnarray}
\rho(\beta)&=&\sum_{x\in X} \sqrt{p_1(x,\beta)p_2(x,\beta)} \nonumber \\
 &=&\sum_{x\in X} \sqrt{\frac{p_1(x,\beta)}{p_2(x,\beta)}}p_2(x,\beta) \nonumber \\
  &=&\mathbb{E}_{\beta}\left[\sqrt{L_{\beta}}|g_2\right]. 
\end{eqnarray}
Similarly
\begin{equation}
\rho(\beta')= \mathbb{E}_{\beta'}\left[\sqrt{L_{\beta'}}|g_2\right]
\end{equation}
Hence, $\rho(\beta)<\rho(\beta')\Rightarrow $
\begin{equation}
  \mathbb{E}_{\beta}\left[\sqrt{L_{\beta}}|g_2\right]<\mathbb{E}_{\beta'}\left[\sqrt{L_{\beta'}}|g_2\right].
\label{Ealpha} 
\end{equation} 
Suppose assertion of the stated theorem is not true, then for all $\Gamma$, $P_e(\beta',\Gamma) \le P_e(\beta,\Gamma)$. Then by Theorem \ref{Blackwell}, $\mathbb{E}_{\beta' }[ \Phi (L_{\beta' })|g_2] \leq   \mathbb{E}_{\beta}[ \Phi (L_{\beta })|g_2] $ which contradicts our result in Eq. \ref{Ealpha}. 
\end{proof}

\subsection{Bounds on Error Probability}
Error probabilities are hard to calculate in general. Tight bounds on $P_e$ are often extremely useful in practice. Kailath \cite{Kailath1967} has shown bounds on $P_e$ in terms of the Bhattacharyya coefficient $\rho$:
\begin{equation}
 \frac{1}{2}\left[2\pi_1-\sqrt{1-4\pi_1 \pi_2\rho^2}\right]\leq P_{e}\leq \left(\pi_1-\frac{1}{2}\right)+ \sqrt{\pi_1 \pi_2}\rho,
\end{equation} 
with $\pi_1+\pi_2=1$. If the priors are equal  $\pi_1=\pi_2=\frac{1}{2}$, the expression simplifies to 

\begin{equation}
 \frac{1}{2}\left[1-\sqrt{1-\rho^2}\right]\leq P_{e}\leq \frac{1}{2}\rho.
\end{equation} 
Inverting relation in Eq. \ref{BBDDefn} for $\rho(B_\alpha)$, we can get the bounds in terms of  $B_\alpha(\rho)$ measure. For the equal prior probabilities case, Bhattacharyya coefficient gives a tight upper bound for large systems when $\rho\rightarrow 0$ (zero overlap) and the observations are independent and identically distributed. These bounds are also useful to discriminate between two processes with arbitrarily low error probability \cite{Kailath1967}. We suppose that tighter upper bounds on error probability can be derived through Matusita's measure of affinity \cite{bhattacharya1982upper, toussaint1977upper, toussaint1975sharper}, but is beyond the scope of present work.

\subsection{f-divergence}
\label{fdiv}
A class of divergence measures called f-divergences were introduced by Csiszar \cite{Csiszar1967,Csiszar1975}  and independently by Ali and Silvey \cite{ali1966} (see \cite{Basseville1989} for review). It encompasses many well known divergence measures including KLD, variational, Bhattacharyya and Hellinger distance. In this section, we show that $B_\alpha(\rho)$ measure for $\alpha\in (1,\infty]$, belongs to the generic class of f-divergences defined by Basseville \cite{Basseville1989}.  
\begin{definition}[f-divergence \cite{Basseville1989}]
Consider a measurable space $\Omega$ with $\sigma$-algebra ${\cal B}$.  Let $\lambda$ be a measure on $(\Omega,{\cal B})$  such that any probability laws $P$ and $Q$ are absolutely continuous with respect to $\lambda$, with densities $p$ and $q$. Let $f$ be a continuous {\it convex} real function on $\mathbb{R}^+$, and  $g$ be an {\it increasing} function on $\mathbb{R}$. The class of divergence coefficients between two probabilities:
\begin{equation}
 d(P,Q)=g\left( \int_\Omega  f \left(\frac{p}{q}\right)q d\lambda \right)
\end{equation} 
are called the f-divergence measure w.r.t.  functions $(f,g)$ . Here $p/q=L$ is the likelihood ratio.  The term in the parenthesis of $g$ gives the  Csiszar's \cite{Csiszar1967,Csiszar1975} definition of f-divergence.  
\end{definition}
The $B_\alpha(\rho(P,Q))$ , for $\alpha\in (1,\infty]$ measure can be written as the following $f$ divergence:
\begin{equation}
 f(x)=-1+\frac{1-\sqrt{x}}{\alpha}, ~~ g(F)=\frac{\log(-F)}{\log(1-1/\alpha)}, 
\end{equation} 
where,
\begin{eqnarray}
   F&=&\int_\Omega  \left[ -1+\frac{1}{\alpha} \left(1-\sqrt{\frac{p}{q}}\right) \right]  q  d\lambda \nonumber \\
    &=& \int_\Omega\left[ q\left(-1+\frac{1}{\alpha}\right)-\frac{1}{\alpha} \sqrt{pq} \right]  d\lambda \nonumber \\
    &=&-1+\frac{1-\rho}{\alpha} .
\end{eqnarray} 
and
\begin{equation}
    g(F) =\frac{\log(1-\frac{1-\rho}{\alpha})}{\log(1-1/\alpha)}=B_\alpha(\rho(P,Q)).
\end{equation} 

\subsection{Curvature and Fisher Information}
\label{CurvFI}
In statistics,  the information that an observable random variable $X$
carries about an unknown parameter $\theta$ (on which it depends) is
given by the Fisher information. One of the important properties of
f-divergence of two distributions of the same parametric family is
that their curvature measures the Fisher information. Following the
approach pioneered by Rao \cite{Rao1945}, we relate
the curvature of BBD measures to the Fisher information and derive the differential curvature metric. The discussions below closely follow  \cite{Dasgupta2011}.

\begin{definition}
 Let \{$f(x|\theta); \theta \in \Theta \subseteq\mathbb{R} $\}, be a family of densities indexed by real parameter $\theta$, with some regularity conditions ($f(x|\theta)$ is absolutely continuous).  
\begin{equation}
 Z_\theta(\phi)\equiv B_\alpha(\theta,\phi)=\frac{\log(1-\frac{1-\rho(\theta,\phi)}{\alpha})}{\log(1-1/\alpha)}
\end{equation}
where $\rho(\theta,\phi)=\int \sqrt{f(x|\theta)f(x|\phi)}dx$
\end{definition}

\begin{theorem}
\label{Theorem:FisherInfo}
 Curvature of $Z_\theta(\phi)|_{\phi=\theta}$ is the Fisher information of $f(x|\theta)$ up to a multiplicative constant.
\end{theorem}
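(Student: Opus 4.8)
The plan is to differentiate $Z_\theta(\phi)$ twice in $\phi$ and evaluate on the diagonal $\phi=\theta$, thereby reducing everything to the local behaviour of the Bhattacharyya coefficient $\rho(\theta,\phi)=\int\sqrt{f(x|\theta)f(x|\phi)}\,dx$. First I would record two elementary facts: since $\rho(\theta,\theta)=1$ we have $Z_\theta(\theta)=B_\alpha(1)=0$, and since $B_\alpha(\rho)\ge 0$ with equality only at $\rho=1$ (Theorem~\ref{PosDefinite}), the point $\phi=\theta$ is a global minimum of $Z_\theta$, so $\partial_\phi Z_\theta(\phi)\big|_{\phi=\theta}=0$. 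Equivalently, differentiating under the integral sign once gives $\partial_\phi\rho\big|_{\phi=\theta}=\tfrac12\int\partial_\theta f(x|\theta)\,dx=\tfrac12\,\partial_\theta\!\int f(x|\theta)\,dx=0$.

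Next I would write $Z_\theta(\phi)=g(\rho(\theta,\phi))$ with $g(\rho)=\log\!\big(1-\tfrac{1-\rho}{\alpha}\big)/\log(1-1/\alpha)$ and apply the chain rule: $\partial_\phi^2 Z_\theta=g''(\rho)\,(\partial_\phi\rho)^2+g'(\rho)\,\partial_\phi^2\rho$. On the diagonal the first term vanishes because $\partial_\phi\rho\big|_{\phi=\theta}=0$, and since $\rho=1$ there one has $1-\tfrac{1-\rho}{\alpha}=1$, so $g'(1)=\tfrac{1}{\alpha\log(1-1/\alpha)}$ (consistent with the first-derivative formula in the proof of Theorem~\ref{PosDefinite}). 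Hence the curvature equals $\tfrac{1}{\alpha\log(1-1/\alpha)}\,\partial_\phi^2\rho\big|_{\phi=\theta}$, and the whole problem is reduced to evaluating $\partial_\phi^2\rho$ on the diagonal.

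For that last piece I would expand $\partial_\phi^2\sqrt{f(x|\theta)f(x|\phi)}=\sqrt{f(x|\theta)}\Big(\tfrac{\partial_\phi^2 f(x|\phi)}{2\sqrt{f(x|\phi)}}-\tfrac{(\partial_\phi f(x|\phi))^2}{4\,f(x|\phi)^{3/2}}\Big)$, set $\phi=\theta$, and integrate, obtaining $\partial_\phi^2\rho\big|_{\phi=\theta}=\tfrac12\int\partial_\theta^2 f(x|\theta)\,dx-\tfrac14\int\tfrac{(\partial_\theta f(x|\theta))^2}{f(x|\theta)}\,dx$. The first integral is zero by differentiating $\int f(x|\theta)\,dx=1$ twice in $\theta$, and the second is exactly the Fisher information $I(\theta)=\int(\partial_\theta\log f(x|\theta))^2 f(x|\theta)\,dx$. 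Therefore $\partial_\phi^2 Z_\theta\big|_{\phi=\theta}=\dfrac{-1}{4\alpha\log(1-1/\alpha)}\,I(\theta)$, i.e. the curvature is $I(\theta)$ times the constant $-1/\big(4\alpha\log(1-1/\alpha)\big)$, which is strictly positive for every admissible $\alpha$ (for $\alpha>1$, $\log(1-1/\alpha)<0$; for $\alpha<0$, both $\alpha<0$ and $\log(1-1/\alpha)>0$), proving the claim.

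The only genuine subtlety I anticipate is justifying the two interchanges of differentiation and integration — differentiating $\int f(x|\theta)\,dx=1$ twice in $\theta$, and differentiating $\rho(\theta,\phi)$ twice under the integral sign — which is precisely what the stated regularity conditions on $\{f(x|\theta)\}$ are there to supply (dominated-convergence-type hypotheses). Once these are granted the computation is short, the decisive simplification being that the $g''(\rho)(\partial_\phi\rho)^2$ term drops out on the diagonal, so that only $g'(1)$ and the second derivative of $\rho$ survive.
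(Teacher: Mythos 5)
Your proof is correct and follows essentially the same route as the paper: reduce the curvature of $Z_\theta(\phi)$ at $\phi=\theta$ to $\partial_\phi^2\rho\big|_{\phi=\theta}$, show the first-derivative term drops out since $\partial_\phi\rho\big|_{\phi=\theta}=0$, and identify $\partial_\phi^2\rho\big|_{\phi=\theta}=-\tfrac14 I_f(\theta)$, yielding the constant $C(\alpha)=-1/\bigl(4\alpha\log(1-1/\alpha)\bigr)>0$. As a side note, your value $Z_\theta(\theta)=B_\alpha(1)=0$ is the correct one (the paper's displayed $Z_\theta(\theta)=1$ is a typo), though this does not affect the curvature computation in either argument.
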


\begin{proof}
 Expand $Z_\theta(\phi)$ around theta
 \begin{equation}
 Z_\theta(\phi)= Z_\theta(\theta)+(\phi-\theta)\frac{dZ_\theta(\phi)}{d\phi}  +\frac{(\phi-\theta)^2}{2}\frac{d^2Z_\theta(\phi)}{d\phi^2} +\dots   
\label{ZTP}
 \end{equation}
Let us observe some properties of Bhattacharyya coefficient  : 
$ \rho(\theta,\phi)= \rho(\phi,\theta),~ ~\rho(\theta,\theta)= 1$, and its derivatives:
\begin{equation}
\hspace{-0.6cm} 
\frac{\partial \rho(\theta,\phi)}{\partial \phi}  \Big |_{\phi=\theta} =  \frac{1}{2}\frac{\partial }{\partial \theta} \int f(x|\theta)dx=0, 
\end{equation}

\begin{align}
 \frac{\partial^2 \rho(\theta,\phi)}{\partial \phi^2}\Big |_{\phi=\theta}=&-\frac{1}{4}\int \frac{1}{f}  \left (\frac{\partial f}{\partial \theta}\right)^2 dx  +\frac{1}{2} \frac{\partial^2 }{\partial \theta^2}\int f dx \nonumber \\
=&-\frac{1}{4}\int f(x|\theta) \left (\frac{\partial \log f(x|\theta)}{\partial \theta}\right)^2 dx\nonumber \\
=&-\frac{1}{4} I_f(\theta).
\end{align}
where $I_f(\theta)$ is the Fisher Information of distribution $f(x|\theta)$
\begin{equation}
 I_f(\theta)=\int f(x|\theta) \left (\frac{\partial \log f(x|\theta)}{\partial \theta}\right)^2 dx.
 \label{FisherInfo}
\end{equation}
Using the above relationships, we can write down the terms in the expansion of Eq. \ref{ZTP}  $Z_\theta(\theta)= 0 ~,~ \frac{\partial Z_\theta(\phi)}{\partial \phi}\Big|_{\phi=\theta} =0 $, and
\begin{equation}
\frac{{\partial^2 Z_\theta(\phi)}}{{\partial \phi^2}}\Big |_{\phi=\theta}=C(\alpha)I_f(\theta) >0,
\end{equation}
where $C(\alpha)=\frac{-1}{4\alpha\log(1-1/\alpha)}>0$ 
\end{proof}
\hspace{-0.4cm} The leading term of $B_\alpha(\theta,\phi)$ is given by  
 \begin{equation}
 B_\alpha(\theta,\phi)  \sim \frac{(\phi-\theta)^2}{2}C(\alpha)I_f(\theta)  . 
\label{Calpha}
 \end{equation}

\subsection{Differential Metrics }
Rao \cite{Rao1987} generalized the Fisher information to multivariate densities with vector valued parameters to obtain a ``geodesic'' distance between two parametric distributions $P_\theta, P_\phi$ of the same family. The Fisher-Rao metric has found applications in many areas such as image structure and shape analysis \cite{Maybank2004,Peter2006} , quantum statistical inference \cite{Brody1998} and Blackhole thermodynamics \cite{Quevedo2008}.
We derive such a metric for BBD measure using property of f-divergence.

Let $\theta,\phi \in \Theta \subseteq  \mathbb{R}^p$, then using the fact that
$ \frac{\partial Z(\theta,\phi)}{\partial \theta_i}\Big |_{\phi=\theta}=0,$
we can easily show that
\begin{equation}
 dZ_{\theta}=\sum_{i,j=1}^{p}\frac{\partial^2 Z_\theta}{\partial \theta_i \partial \theta_j}d\theta_i d \theta_j+\dots =\sum_{i,j=1}^{p}g_{ij}d\theta_id\theta_j+\dots.
\end{equation}
The curvature metric $g_{ij}$ can be used to find the geodesic on the curve $\eta(t),~t \in [0,1]$ with
\begin{equation}
\label{curveboundary}
{ \cal C}=\eta(t): ~~~\eta(0)=\theta ~~ \eta(1)=\phi. 
\end{equation} 
Details of the geodesic equation are given in many standard  differential geometry books. In the context of probability  distance measures reader is referred to (see 15.4.2 in A DasGupta \cite{Dasgupta2011} for details)
The curvature metric of all Csiszar f-divergences are just scalar multiple KLD measure 
\cite{Dasgupta2011,Basseville1989} given by:
\begin{equation}
g^{f}_{ij}(\theta)= f''(1)g_{ij}(\theta).
\end{equation}  
For our BBD measure 
\begin{eqnarray}
 f''(x)&=&\left(-1+\frac{1-\sqrt{x}}{\alpha}\right)''=\frac{1}{4\alpha x^{3/2}} \nonumber \\
	  \tilde{f}''(1)&=&1/4\alpha.
\end{eqnarray}
Apart from the $-1/\log(1-\frac{1}{\alpha})$, this is same as $C(\alpha)$ in Eq. \ref{Calpha}. It follows that the geodesic distance for our metric is same  KLD geodesic distance up to a multiplicative factor. KLD geodesic distances are tabulated in DasGupta \cite{Dasgupta2011}.

\subsection{Relation to other measures }
\label{Ineq}
Here we focus on the special case $\alpha=2$, i.e. $B_{2}(\rho)$  

%
%

\begin{theorem}
\label{HXiIEq2}
\begin{equation}
B_2 \leq H^2 \leq \log {4} ~ B_2 
\end{equation} 
where 1 and  $\log{4}$ are sharp.
\end{theorem}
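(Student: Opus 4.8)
The plan is to reduce both inequalities to classical elementary bounds by a single change of variable. Recall from the special-case computations that $H^2 = H^2(\rho) = 1-\rho$ and $\zeta = B_2(\rho) = -\log_2\!\big(\tfrac{1+\rho}{2}\big) = -\ln\!\big(\tfrac{1+\rho}{2}\big)/\ln 2$, with $\rho \in [0,1]$. Put $x = (1-\rho)/2 \in [0,\tfrac12]$, so that $H^2 = 2x$ and $\tfrac{1+\rho}{2} = 1-x$, hence $\zeta = -\ln(1-x)/\ln 2$. Both claimed inequalities then become statements about $x$ on $[0,\tfrac12]$.

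For the upper bound $H^2 \le (\log 4)\,\zeta$: since $\log 4 = 2\ln 2$, after multiplying through by $\ln 2$ this reads $2x \le -2\ln(1-x)$, i.e. $1-x \le e^{-x}$, the standard inequality valid for all real $x$, with equality iff $x=0$. Translating back, equality holds only at $\rho = 1$; and since $-\ln(1-x) = x + O(x^2)$ gives $H^2/\zeta \to \log 4$ as $\rho \to 1$, the constant $\log 4$ is sharp.

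For the lower bound $\zeta \le H^2$: in the $x$ variable this is $-\ln(1-x) \le 2x\ln 2$, i.e. $(1-x)\,4^{x} \ge 1$ on $[0,\tfrac12]$. I would prove this by setting $\varphi(x) := \ln(1-x) + x\ln 4$ and observing that $\varphi(0) = \varphi(\tfrac12) = 0$ while $\varphi''(x) = -(1-x)^{-2} < 0$; concavity together with the vanishing at the two endpoints forces $\varphi \ge 0$ on all of $[0,\tfrac12]$, and exponentiating yields the claim. The endpoint $x=\tfrac12$ is $\rho=0$, where $\zeta = H^2 = 1$, so the constant $1$ is attained and hence sharp.

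The only mildly delicate point is bookkeeping: one must track the base-$2$ logarithm in $\zeta$ against the natural logarithm in $\log 4 = 2\ln 2$, and note that the two sharpness statements are realized at the two \emph{different} endpoints, $\rho = 1$ for $\log 4$ and $\rho = 0$ for $1$. An alternative to the concavity step is to show directly that $H^2/\zeta = 2x\ln 2\,/\,(-\ln(1-x))$ is monotonically decreasing on $(0,\tfrac12]$, which simultaneously produces both inequalities with both sharp constants; I would nonetheless present the concavity route, as it is the shortest.
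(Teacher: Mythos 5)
Your proof is correct, and the bookkeeping ($\log 4 = 2\ln 2$, sharpness at the two different endpoints $\rho=1$ and $\rho=0$) is handled properly. It differs from the paper's argument mainly in the upper bound: the paper defines $g(\rho) = H^2/\zeta = (1-\rho)/\bigl(-\log_2\tfrac{1+\rho}{2}\bigr)$, computes $g'$ and asserts it is nonnegative (which silently requires the sublemma $\tfrac{1-\rho}{1+\rho}+\log\tfrac{1+\rho}{2}\ge 0$), and then takes $\sup_{\rho\in[0,1)}g = \lim_{\rho\to 1}g(\rho)=\log 4$; your substitution $x=(1-\rho)/2$ reduces the same bound to the classical inequality $1-x\le e^{-x}$, which is more elementary and makes the sharpness of $\log 4$ transparent as the $\rho\to 1$ limit of the ratio. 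For the lower bound the two arguments are essentially the same idea in different clothing: the paper invokes its earlier convexity result $B_{\alpha>1}(\rho)\le H^2(\rho)$ (a convex function agreeing with the linear $1-\rho$ at $\rho=0,1$ lies below it), while your $\varphi(x)=\ln(1-x)+x\ln 4$ is concave and vanishes at both endpoints of $[0,\tfrac12]$, which is the same chord comparison after the change of variable. Your closing remark that monotonicity of $2x\ln 2/(-\ln(1-x))$ would give both constants at once is precisely the paper's ratio route (in the $\rho$ variable), so the two proofs are fully consistent; what yours buys is avoiding the unproved positivity of the numerator of $g'$, at the cost of treating the two bounds by two separate small arguments.
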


\begin{proof}
Sharpest  upper bound is achieved via taking 
$\mathrm{sup}_{\rho \in[0,1)} \frac{H^2(\rho)}{B_2(\rho)}$. Define 
\begin{eqnarray}
  g(\rho) &\equiv & \frac{1-\rho}{-\log_2{(1+\rho)/2}}.
\end{eqnarray}
We note that $g(\rho)$ is continuous and has no singularities whenever $\rho\in[0,1)$. 
Hence
\[
 g'(\rho)=\frac{\frac{1-\rho}{1+\rho}+\log(\frac{1+\rho}{2})}{\log^2{\frac{\rho+1}{2}}} \log{2}\geq 0 .
\] 
It follows that $g(\rho)$ is non-decreasing and hence $\mathrm{sup}_{\rho \in[0,1)} g(\rho)=\lim_{\rho \to 1}g(\rho)=\log(4)$.   Thus 
\begin{equation}
 H^2/B_2 \leq \log {4} .
\end{equation} 
Combining this with convexity property of $B_\alpha(\rho)$ for $\alpha >1$, we get 
\[
B_2 \leq H^2 \leq \log{4} ~ B_2  
\]
Using the same procedure we can prove a generic version of this inequality for $\alpha \in (1,\infty]$ , given by
\begin{equation}
 B_\alpha(\rho) \leq H^2 \leq -\alpha \log \left(1-\frac{1}{\alpha}\right) ~ B_\alpha(\rho)  
\end{equation} 

\end{proof}

\begin{definition}[Jensen-Shannon Divergence: ]
The Jensen difference between two distributions $P, Q$, with densities $p,q$ and weights
$(\lambda_1,\lambda_2 ); ~\lambda_1+\lambda_2=1$, is defined as,
\begin{equation}
 {\cal J}_{\lambda_1,\lambda_2}(P,Q)=H_s(\lambda_1 p+\lambda_2 q)-\lambda_1 H_s(p)-\lambda_2 H_s(q),
\end{equation} 
where $H_s$ is the Shannon entropy. Jensen-Shannon divergence (JSD) \cite{Burbea1982,Rao1982b,Lin1991} is based on the Jensen difference and is given by:
\begin{align}
JS(P,Q)=&{\cal J}_{1/2,1/2}(P,Q) \nonumber \\
=&\frac{1}{2} \int \Big[ p\log\left(\frac{2p}{p+q}\right)  \nonumber \\
& + q\log\left(\frac{2q}{p+q}\right) \Big] d\lambda 
\end{align}
\end{definition}
The structure and goals of JSD and BBD measures are similar. The following theorem compares the two metrics using Jensen's inequality. 
\begin{lemma}
\textbf{Jensen's Inequality:} For a convex function $\psi$, $\mathbb{E}[\psi(X)]\geq\psi(\mathbb{E}[X])$. 
\end{lemma}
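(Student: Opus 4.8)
The plan is to prove Jensen's inequality from the supporting-line characterization of convexity, which is the single nontrivial ingredient; everything else is just linearity and monotonicity of expectation. First I would set $\mu \equiv \mathbb{E}[X]$ (assumed finite, with $X$ taking values in the interval on which $\psi$ is convex). Since $\psi$ is convex, its one-sided derivatives exist at the interior point $\mu$, and for any slope $c$ lying between them the line $x \mapsto \psi(\mu) + c(x-\mu)$ is a \emph{supporting line} of the graph:
\[
\psi(x) \;\geq\; \psi(\mu) + c\,(x-\mu) \qquad \text{for every } x \text{ in the domain.}
\]
This is exactly the geometric content of convexity — the graph lies above each of its supporting lines — and I would either cite it or derive it quickly from the two-point definition of convexity via a monotonicity-of-slopes argument.

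Next I would substitute the random variable $X$ into this pointwise bound, so that $\psi(X) \geq \psi(\mu) + c\,(X-\mu)$ holds almost surely, and then take expectations. By monotonicity and linearity of $\mathbb{E}[\cdot]$,
\[
\mathbb{E}[\psi(X)] \;\geq\; \psi(\mu) + c\,\bigl(\mathbb{E}[X]-\mu\bigr) \;=\; \psi(\mu) \;=\; \psi\bigl(\mathbb{E}[X]\bigr),
\]
where the correction term vanishes because $\mathbb{E}[X] = \mu$. This establishes the claim. One should note the boundary cases: if $\mu$ is an endpoint of the domain the inequality is trivial, and integrability of $\psi(X)$ itself is not needed since the lower bound is an affine function of $X$.

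For the discrete setting actually used later in the paper — $X$ taking finitely many values $x_i$ with weights $\lambda_i \ge 0$, $\sum_i \lambda_i = 1$, so that $\mathbb{E}[X] = \sum_i \lambda_i x_i$ — I could instead give a fully self-contained proof by induction on the number of atoms: the base case $n = 2$ is the definition of convexity, and for the inductive step I would write $\sum_{i=1}^{n}\lambda_i x_i$ as the convex combination $\lambda_n x_n + (1-\lambda_n)\sum_{i=1}^{n-1}\frac{\lambda_i}{1-\lambda_n}x_i$, apply the $n=2$ case, and close with the induction hypothesis. The main obstacle in the first route is simply justifying the existence of the supporting line (handled by the left/right derivative argument), and in the second route it is the routine bookkeeping when some $\lambda_i$ vanish; past that point the result is a one-line consequence of linearity of expectation.
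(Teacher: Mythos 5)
Your proof is correct. The paper itself states this lemma without proof, treating Jensen's inequality as a standard known result used only as an ingredient in the subsequent theorem comparing $\zeta$ with the Jensen--Shannon divergence, so there is no in-paper argument to compare against. Your supporting-line argument is the standard and complete route: the existence of a supporting line at $\mu=\mathbb{E}[X]$ follows from convexity via one-sided derivatives, and monotonicity plus linearity of expectation finish the job, with the affine lower bound sidestepping any integrability issue for $\psi(X)$. The discrete induction variant you sketch is also adequate for the way the lemma is actually invoked in the paper (in the form $\mathbb{E}[-\log f(X)]\ge -\log\mathbb{E}[f(X)]$, which is your general statement applied to the convex function $-\log$), so either version would serve.
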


\begin{theorem}[Relation to Jensen-Shannon measure]
$ JS(P,Q)\geq \frac{2}{\log{2}}B_2(P,Q) - \log{2}$
\end{theorem}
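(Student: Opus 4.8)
The plan is to route through the standard representation of the Jensen--Shannon divergence as a symmetrized Kullback--Leibler divergence against the mean measure, apply Jensen's inequality to replace each KL term by a Bhattacharyya coefficient, and then dominate the resulting ``mixed'' coefficients in terms of $\rho(P,Q)$.

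First I would set $M$ to be the distribution with density $m=(p+q)/2$ and record the identity $JS(P,Q)=\tfrac12 I(P,M)+\tfrac12 I(Q,M)$, which is immediate from $\int p\log\tfrac{2p}{p+q}\,dx=\int p\log\tfrac{p}{m}\,dx$ and the analogue with $q$. Next, for densities $u,v$ of distributions $U,V$, I would use $\log(u/v)=-2\log\sqrt{v/u}$ together with Jensen's inequality (applied to the convex function $-\log$ and the probability measure $u\,dx$) to obtain
\[
I(U,V)=-2\int u\,\log\sqrt{v/u}\,dx\ \ge\ -2\log\!\int\!\sqrt{uv}\,dx=-2\log\rho(U,V).
\]
Applying this with $(U,V)=(P,M)$ and $(Q,M)$ and adding gives $JS(P,Q)\ge -\log\bigl(\rho(P,M)\,\rho(Q,M)\bigr)$.

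The crux is to bound $\rho(P,M)$ and $\rho(Q,M)$ from above by a quantity involving only $\rho:=\rho(P,Q)$. Writing $\rho(P,M)=\tfrac{1}{\sqrt2}\int\sqrt{p^2+pq}\,dx$ and using subadditivity of the square root, $\sqrt{p^2+pq}\le p+\sqrt{pq}$, gives $\rho(P,M)\le\tfrac{1+\rho}{\sqrt2}$, and symmetrically $\rho(Q,M)\le\tfrac{1+\rho}{\sqrt2}$. Hence $\rho(P,M)\rho(Q,M)\le\tfrac{(1+\rho)^2}{2}$, so $JS(P,Q)\ge\log 2-2\log(1+\rho)$. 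Finally, inverting $\zeta=B_2(\rho)=-\log_2\tfrac{1+\rho}{2}$ as $\log(1+\rho)=(1-\zeta)\log 2$ and substituting yields $JS(P,Q)\ge 2(\log 2)\,\zeta(P,Q)-\log 2$ (so the leading constant I obtain is $2\log 2=\log 4$), which is the asserted bound.

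I expect the main obstacle to be the third step: one must pick exactly the right pointwise inequality to dominate the integrand of the mixed coefficient by a combination of $p$ and $\sqrt{pq}$. The choice $\sqrt{p^2+pq}\le p+\sqrt{pq}$ is the natural one because it becomes an equality wherever $p$ or $q$ vanishes, so the whole chain is tight on disjoint supports ($\rho=0$), consistent with the maximal value $\log 2$ of $JS$. A more direct attempt---applying Jensen only once to the integral form of $JS$---does not make $\rho(P,Q)$ surface cleanly, so the detour through the KL decomposition seems necessary; the remaining manipulations are routine.
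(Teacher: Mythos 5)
Your argument is correct and is essentially the paper's own proof with the two main steps interchanged: the paper works directly with the unsymmetrized integral $\int p\log\frac{2p}{p+q}\,dx$, applies the pointwise subadditivity $\sqrt{p+q}\le\sqrt{p}+\sqrt{q}$ first and Jensen's inequality for $-\log$ second, whereas you first apply Jensen (in the guise of $I(U,V)\ge -2\log\rho(U,V)$ against the mixture $M=(P+Q)/2$) and then the same subadditivity inside $\rho(P,M)$ and $\rho(Q,M)$; both chains land on the identical intermediate bound $JS\ge \log 2-2\log(1+\rho)$, so nothing genuinely new is gained or lost by the detour through the KL decomposition. One caveat: your final constant $2\log 2$ is not literally the stated $\frac{2}{\log 2}$, so you should not call your result ``the asserted bound.'' In fact your constant is the one the computation actually yields: with natural logarithms the bound is $JS\ge 2(\log 2)\,\zeta-\log 2$, which is tight at $\rho=0$, while the paper's closing conversion of $-2\log\frac{1+\rho}{2}$ into $\frac{2\zeta}{\log 2}$ is a base-of-logarithm slip; the statement is consistent only if $JS$ is measured in base-2 logarithms, where it reads $JS\ge 2\zeta-1$. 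As literally written in nats, $\frac{2}{\log 2}\zeta-\log 2$ evaluates to about $2.19$ at $\rho=0$, exceeding the maximal value $\log 2$ of $JS$, so your corrected constant is the right one.
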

We use the un-symmetrized Jensen-Shannon metric for the proof.
\begin{proof}
\begin{align*}
\hspace{-0.7cm} JS(P,Q) =& \int{p(x)\log{\frac{2p(x)}{p(x)+q(x)}} dx}  \\
=& -2\int{p(x)\log{\frac{\sqrt{p(x)+q(x)}}{\sqrt{2p(x)}}}dx}  \\
  \geq& -2\int{p(x)\log{\frac{\sqrt{p(x)}+\sqrt{q(x)}}
 {\sqrt{2p(x)}}}} dx \\ 
 & (\textmd{since}~ \sqrt{p+q}\leq\sqrt{p}+\sqrt{q}) \\ 
=&  \mathbb{E}_P\left[-2\log{\frac{\sqrt{p(X)}+\sqrt{q(X)}}{\sqrt{2p(X)}}}\right]
\end{align*}
By Jensen's inequality \\
  $~\mathbb{E}[-\log f(X)]\ge -\log \mathbb{E}[f(X)]$, we have 
  \begin{align*}
 \mathbb{E}_P  &\left[-2\log{\frac{\sqrt{p(X)}+ \sqrt{q(X)}}{\sqrt{2p(X)}}}\right] \geq  \\
  &-2\log{\mathbb{E}_P\left[\frac{\sqrt{p(X)}+\sqrt{q(X)}}{\sqrt{2p(X)}}\right]} .  
  \end{align*}
Hence,
\begin{eqnarray}
JS(P,Q) &\ge & -2\log{\int{p(x)\frac{\left(\sqrt{p(x)}+\sqrt{q(x)}\right)}{\sqrt{2p(x)}} dx}} \nonumber \\
 &=& -2\log{\left(\frac{1+\int{\sqrt{p(x)q(x)}}}{2}\right)}-\log{2}\nonumber \\
 &=& 2\left(\frac{B_2(p(x),q(x))}{\log{2}}\right)-\log{2}\nonumber \\
 &=& \frac{2}{\log{2}}B_2(P,Q)-\log{2}.
\end{eqnarray}
\end{proof}

%

\section{\uppercase{Application to signal detection}}

Signal detection is a common problem occurring in many fields such as communication engineering, pattern recognition, and Gravitational wave detection \cite{poor1994introduction}. In this section, we briefly describe the problem and terminology used in signal detection.  We illustrate though simple cases how divergence measures, in particular BBD can be used for discriminating and detecting signals buried in white noise of correlator receivers (matched filter).  For greater details  of the formalism used we refer the reader to review articles in the context of Gravitational wave detection by Jaranowski and Kr\'olak\cite{jaranowski2007gravitational} and  Sam Finn \cite{finn1992detection}.

One of the central problem in signal detection is to detect whether a deterministic signal $s(t)$ is embedded in an observed data $x(t)$, corrupted by noise $n(t)$. This can be posed as a hypothesis testing problem where the null hypothesis is absence of signal and alternative is its presence. 
 We take the noise to be additive , so that $x(t)=n(t)+s(t)$. We define the following terms used in signal detection: \textit{Correlation} $G$ (also called matched filter)  between $x$ and $s$, and \textit{signal to noise ratio} $\varrho$ \cite{finn1992detection,budzynski2008applications}.
\begin{equation}
 G = (x|s) ,~~~ \varrho = \sqrt{(s,s)} ,
 \label{matchedfiliter}
\end{equation}
where the scalar product $(.|.)$ is defined by
\begin{equation}
(x|y):=4\Re\int_0^\infty\!\frac{\tilde{x}(f)\tilde{y}^{*}(f)}{\tilde{N}(f)}\,\mathrm{d}f.
\label{scalarproduct}
\end{equation}
$\Re$ denotes the real part of a complex expression, tilde denotes the Fourier transform and the asterisk * denotes complex conjugation. $\tilde{N}$ is the \emph{one-sided spectral density of the noise}.\\
For white noise, the probability densities of $G$ when respectively signal is present and absent are given by \cite{budzynski2008applications}
\begin{equation}
p_{1}(G)= \frac{1}{\sqrt{\,2\pi}\varrho}\exp\left(\,-\frac{(G-\varrho^{2})^{2}}{2\varrho^{2}}\,\right),
\label{p_signal}
\end{equation}
\begin{equation}
p_{0}(G)=\frac{1}{\sqrt{\,2\pi}\varrho}\exp\left(\,-\frac{G^{2}}{2\varrho^{2}}\,\right)
\label{p_signal_ab}
\end{equation}


\subsection{Distance between Gaussian processes}
Consider a stationary Gaussian random process $\mathbf{X}$, which has signals $s_1$ or  $s_2$ with probability densities   $p_1$ and $p_2$ respectively of being present in it.  These densities follow the form Eq. \ref{p_signal} with signal to noise ratios  $\varrho_1^2$ and $\varrho_2^2$ respectively. The probability density  $p(\mathbf{X})$ of Gaussian process can modeled as limit of multivariate Gaussian distributions. The divergence measures between these processes $d(s_1,s_2)$  are  in general functions of the correlator $(s_1-s_2|s_1-s_2)$ \cite{budzynski2008applications}. Here we focus on distinguishing monochromatic signal $s(t)=A\cos(\omega t+\phi)$ and filter  $s_F(t)=A_F\cos(\omega_F t+\phi)$ (both buried in noise), separated in frequency or amplitude. 
  
 The Kullback-Leibler divergence between the signal and filter $I(s,s_F)$ is given by the correlation $(s-s_{F}|s-s_{F})$:
\begin{align}
I(s,s_F)=& (s-s_{F}|s-s_{F})= (s|s)+(s_F|s_F)-2(s|s_F) \nonumber \\ 
=&\varrho^{2}+\varrho_{F}^{2}-2\varrho\varrho_{F}[\langle\cos(\Delta\omega t)\rangle\cos(\Delta\phi)   \nonumber \\
&-\langle\sin(\Delta\omega t)\rangle\sin(\Delta\phi)],
\label{SF_correlate}
\end{align}
where $\langle\rangle$ is the average  over observation time $[0,T]$. Here we have assumed that noise spectral density $N(f)=N_0$ is constant over the frequencies $[\omega,\omega_F]$. 
The SNRs are given by
\begin{eqnarray}
\varrho^{2}=\frac{A^{2}T}{N_0} ,~~~ \varrho^{2}_{F}=\frac{A_{F}^{2}T}{N_0}.
\end{eqnarray}
(for detailed discussions we refer the reader to Budzynksi \emph{et. al} \cite{budzynski2008applications}). 

The Bhattacharyya distance between Gaussian processors with signals of same energy is  ( Eq 14 in \cite{Kailath1967}) just a multiple of the KLD $B=I/8$. We use this result to extract the  Bhattacharyya coefficient :
\begin{equation}
\rho(s,s_F)=\exp\left(-\frac{(s-s_{F}|s-s_{F})}{8}\right)
\end{equation}

\subsubsection{Frequency difference}
Let us consider the case when the SNRs of signal and filter are equal, phase difference is zero, but frequencies differ by $\Delta \omega$. 
The KL divergence is obtained by evaluating the correlator  in Eq. \ref{SF_correlate}  
\begin{equation}
I(\Delta \omega )=(s-s_{F}|s-s_{F})=2\varrho^{2}\left(1-\frac{\sin\left(\Delta\omega T\right)}{\Delta\omega T}\right).
\end{equation}
by noting $\langle\cos(\Delta\omega t)\rangle=\frac{\sin(\Delta\omega T)}{\Delta\omega T} ~~ \text{and}~~ \langle\sin(\Delta\omega t)\rangle=\frac{1-\cos(\Delta\omega T)}{\Delta\omega T} $.
Using this, the expression for BBD family can be written as 
\begin{equation}
B_{\alpha}(\Delta\omega)=\frac{\log\left( 1-\frac{1}{\alpha}\left[ 1-e^{-\frac{\varrho^{2}}{4}\left(1-\frac{\sin\left(\Delta\omega T\right)}{\Delta\omega T}\right)}\right]\right)}{\log\left(1-\frac{1}{\alpha}\right)}.
\end{equation}
As we have seen in section \ref{fdiv},  both BBD and KLD belong to the f-divergence family.  Their curvature for distributions belonging to same parametric family is a constant times the Fisher information (FI) (see Theorem: \ref{Theorem:FisherInfo}). Here we discuss where the BBD and KLD deviates from FI, when we account for higher terms in the expansion of these measures. 

The Fisher matrix element for frequency $g_{\omega, \omega}=E\left[\big(\frac{\partial \log \Lambda}{\partial \omega}\big)^2 \right]=\rho^2 T^2/3$ \cite{budzynski2008applications}, where $\Lambda$  is the likelihood ratio. Using the relation for line element $ds^2=\sum_{i,j}g_{ij}d \theta_i d \theta_j$ and noting that only frequency is varied, we get 
\begin{equation}
\mathrm{d}s=\frac{\varrho T\Delta\omega}{\sqrt{\,3}}.
\end{equation}
Using the relation between curvature of BBD measure and Fisher's Information in Eq.\ref{Calpha}, we can see that for low frequency differences the line element varies as:
\begin{equation}
\sqrt{\,\frac{2B_{\alpha}(\Delta\omega)}{C(\alpha)}}\sim\mathrm{d}s.	\nonumber
\end{equation}
Similarly $\sqrt{d_{KL}}\sim ds $ at low frequencies. However, at higher frequencies both KLD and BBD deviate from the Fisher information metric. In Fig. \ref{fig:FisherBBDCompareFrq}, we have plotted $\mathrm{d}s$, $\sqrt{\,d_{KL}}$ and $\sqrt{2B_{\alpha}(\Delta\omega)/C(\alpha)}$ with $\alpha=2$ and Hellinger distance ($\alpha\rightarrow
 \infty$)  for $\Delta\omega\in(0,0.1)$. We observe that till $\Delta \omega=0.01 $ (i.e. $\Delta \omega T \sim 1$), KLD and BBD follows Fisher Information and after that they start to deviate. This suggests that Fisher Information is not sensitive to large deviations.  There is not much difference between KLD, BBD and Hellinger for large frequencies due to the correlator $G$ becoming essentially a constant over a wide range of frequencies.  

\begin{figure}
\includegraphics[scale=0.4]{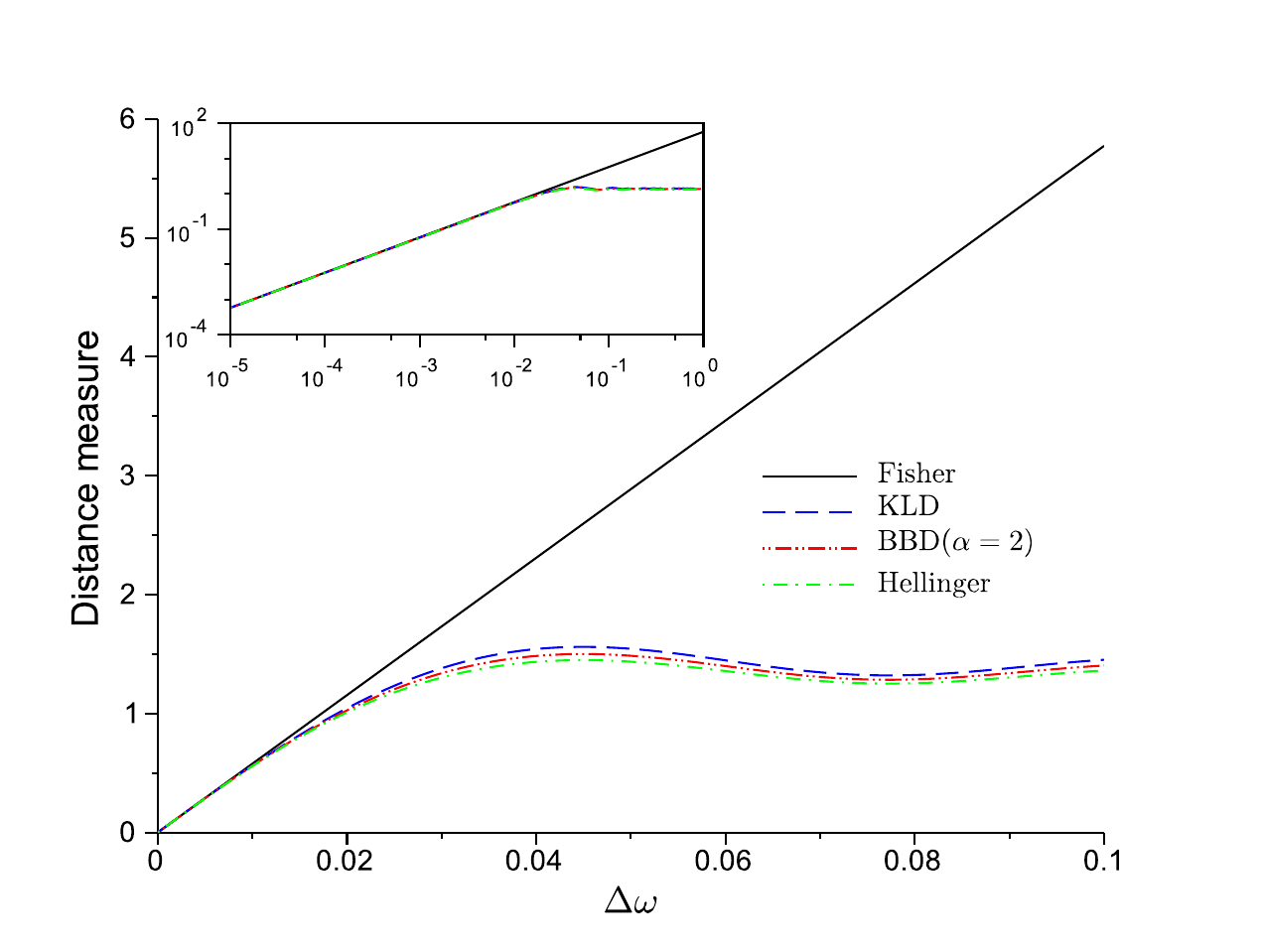}
\caption{Comparison of Fisher Information, KLD, BBD and Hellinger distance for two monochromatic signals differing by frequency $\Delta \omega$, buried in white noise. Inset shows wider range $\Delta\omega\in(0,1)$ .  We have set $\rho=1$ and chosen parameters $T=100$ and $N_0=10^4$.}
\label{fig:FisherBBDCompareFrq}
\end{figure}

\subsubsection{Amplitude difference}

\begin{figure}[!ht]
\includegraphics[scale=0.4]{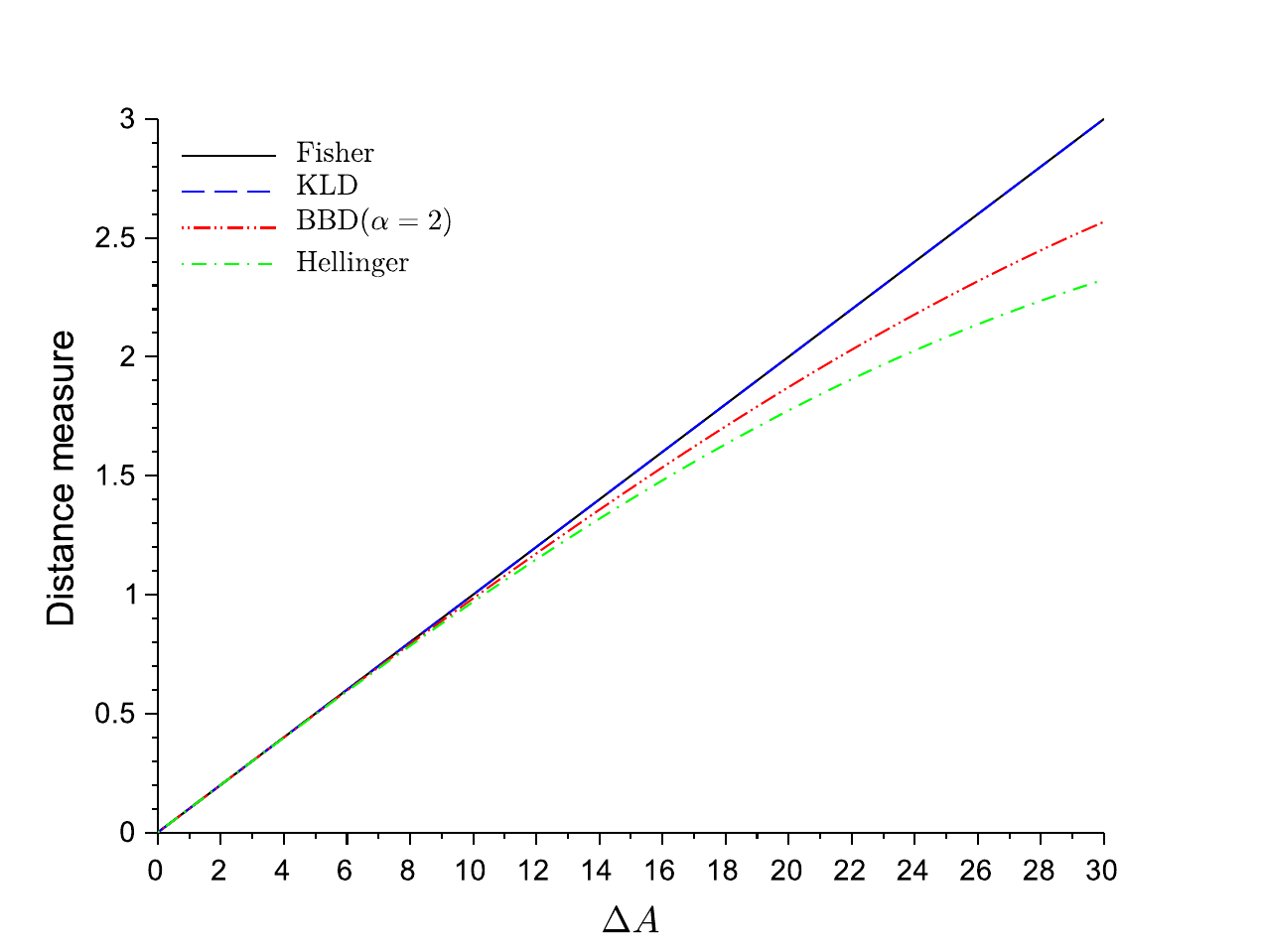}
\caption{Comparison of Fisher information line element with KLD, BBD and Hellinger distance for signals differing in amplitude and buried in white noise. We have set $A=1$,  $T=100$ and $N_0=10^4$.}
\label{fig:FisherBBDCompareAmp}
\end{figure}

We now consider the case where the frequency and phase of the signal and the filter are same but they differ in amplitude $\Delta A$ (which reflects in differing SNR). The correlation reduces to
\begin{align}
(s-s_{F}|s-s_{F})= &\frac{A^{2}T}{N_0}+\frac{(A+\Delta A)^{2}T}{N_0}-2\frac{A(A+\Delta A)T}{N_0} \nonumber \\
  =& \frac{(\Delta A)^{2}T}{N_0}.
\end{align}
This gives us $I(\Delta A)=\frac{(\Delta A)^{2}T}{N_0}$, which is the same as the line element $ds^2$ with Fisher metric $ds=\sqrt{T/2N_0}\Delta A$. 
In Fig. \ref{fig:FisherBBDCompareAmp}, we have plotted $\mathrm{d}s$, $\sqrt{\,d_{KL}}$ and $\sqrt{2B_{\alpha}(\Delta\omega)/C(\alpha)}$ for $\Delta A\in(0,40)$.
KLD and FI line element are the same. Deviations of BBD and Hellinger can be observed only for $\Delta A>10$.

Discriminating between two signals $s_1,s_2$ requires minimizing the error probability between them. By Theorem \ref{KBExtend}, there exists priors for which the problem translates into maximizing the divergence for BBD measures. For the monochromatic signals discussed above, the distance depends on parameters $(\rho_1,\rho_2,\Delta \omega, \Delta \phi)$. We can maximize the distance for a given frequency difference by differentiating with respect to phase difference $\Delta \phi$ \cite{budzynski2008applications}.  In Fig. \ref{fig:BBDSNR}, we show the variation of maximized BBD for different signal to noise ratios ($\varrho_1,\varrho_2$), for a fixed frequency difference $\Delta \omega=0.01$.  The intensity map shows different bands which can be used for setting the threshold for signal separation. 

Detecting signal of known form involves minimizing the distance measure over the parameter space of the signal. A threshold on the maximum ``distance'' between the signal and filter can be put so that a detection is said to occur whenever the measures fall within this threshold. Based on a series of tests,  Receiver Operating Characteristic (ROC) curves can be drawn to study the effectiveness of the distance measure in signal detection. We leave such details for future work.

\begin{figure}[!ht]
\includegraphics[scale=0.4]{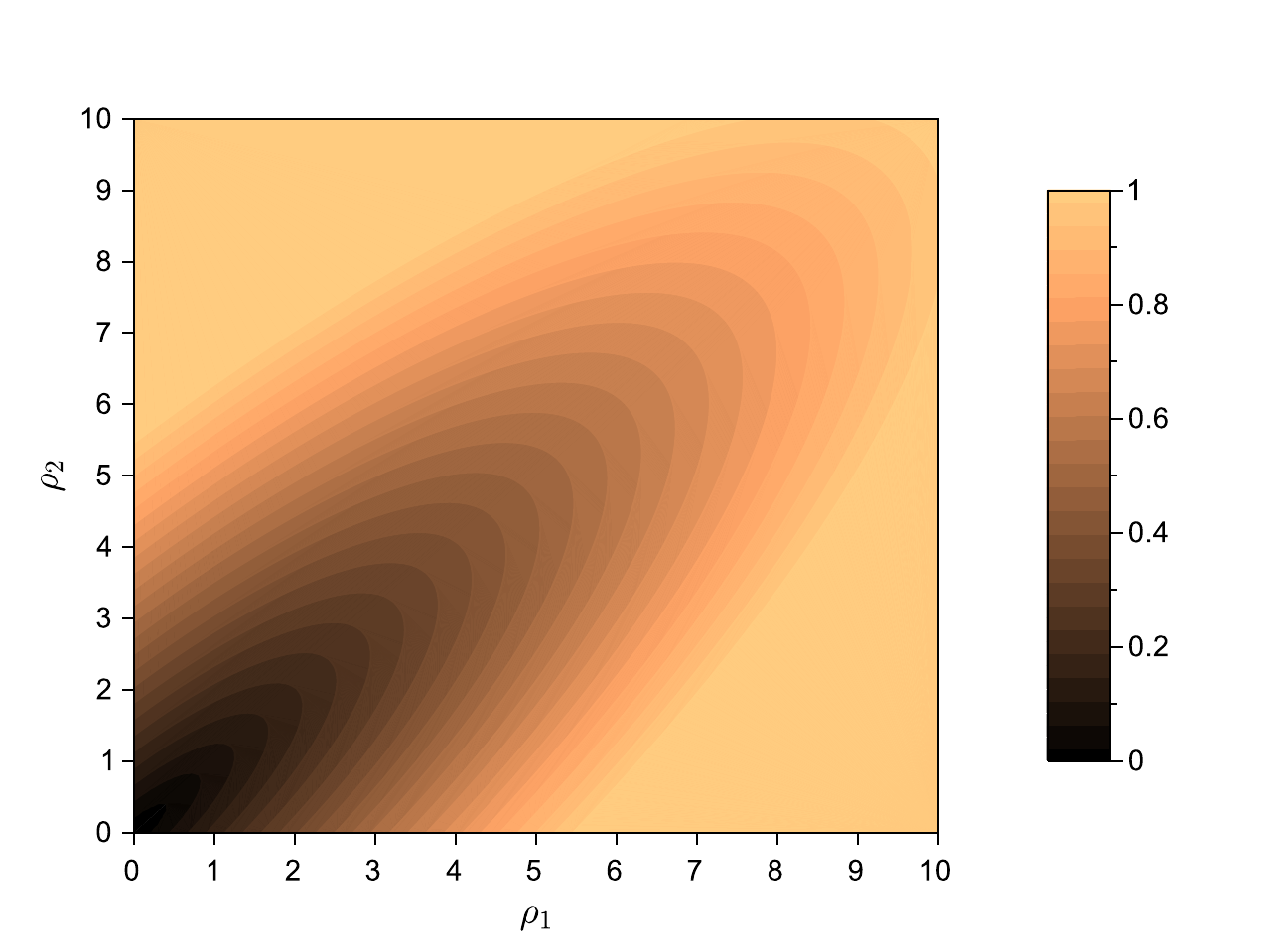}
\caption{BBD with different signal to noise ratio for a fixed . We have set  $T=100$ and $\Delta \omega=0.01$.}
\label{fig:BBDSNR}
\end{figure}

%
%

\section{\uppercase{Summary and Outlook}}
In this work we have introduced a new family of bounded divergence measures based on the Bhattacharyya distance, called bounded Bhattacharyya distance measures.
We have shown that it belongs to the class of generalized f-divergences and inherits all its properties, such as those relating Fishers Information and curvature metric.  We have discussed several special cases of our measure, in particular squared Hellinger distance, and studied relation  with other measures such as Jensen-Shannon divergence. We have also extended the Bradt Karlin theorem on error probabilities to BBD measure.  Tight bounds on Bayes error probabilities can be put by using properties of Bhattacharyya coefficient. 

Although many bounded divergence measures have been studied and used in various  applications, no single measure is useful in all types of problems studied. Here we have illustrated an application to signal detection problem by considering ``distance'' between monochromatic signal and filter buried in white Gaussian noise with differing frequency or amplitude, and comparing it to Fishers Information and Kullback-Leibler divergence. 

A detailed study with chirp like signal and colored noise occurring in Gravitational wave detection will be taken up in a future study.  Although our measures have a tunable parameter $\alpha$, here we have focused on a special case with $\alpha =2 $. In many practical applications where extremum values are desired such as minimal error, minimal false acceptance/rejection  ratio etc, exploring the BBD measure by varying $\alpha$ may be desirable.  Further, the utility of BBD measures is to be explored in parameter estimation based on minimal disparity estimators and Divergence information criterion in Bayesian model selection \cite{basu1994minimum}. However, since the focus of the current paper is introducing a new measure and studying its basic properties, we leave such applications to statistical inference and data processing to future studies.

\section*{\uppercase{Acknowledgements}}
One of us (S.J) thanks Rahul Kulkarni for insightful discussions, Anand Sengupta for discussions on application to signal detection, and acknowledge the financial support in part by grants DMR-0705152 and DMR-1005417 from the US National Science Foundation. M.S. would like to thank the Penn State Electrical Engineering Department for support.

\vfill
\bibliographystyle{apalike}
{\small
\bibliography{Papers}}

\section*{\uppercase{Appendix}}
\label{app}

{\bf BBD measures of some common distributions}

Here we provide explicit expressions for BBD $B_2$, for some common distributions. For brevity we denote $\zeta\equiv B_2$.  
\begin{itemize}
\item {\bf Binomial} :\\
\begin{center}
 $P(k) = \binom{n}{k}p^k(1-p)^{n-k},$  $Q(k) = \binom{n}{k} q^k(1-q)^{n-k}. $ 
\end{center}

\begin{equation}
\zeta_{bin}(P,Q)=-\log_2 \left( \frac{1+[\sqrt{pq}+\sqrt{(1-p)(1-q)}]^n }{2}\right) .  
\end{equation} 
 
\item {\bf Poisson } : \\
\begin{center}
$P(k) =\frac{\lambda_p^ke^{-\lambda_p}}{k!},$ $Q(k) =\frac{\lambda_q^ke^{-\lambda_q}}{k!}.$ 
\end{center}

\begin{equation}
\zeta_{poisson}(P,Q)=-\log_2 \left(\frac{1+e^{-(\sqrt{\lambda_p}-\sqrt{\lambda_q})^2/2}}{2}\right) .
\end{equation} 

\item {\bf Gaussian} : 
\begin{eqnarray}
P(x) &=&\frac{1}{\sqrt{2\pi}\sigma_p}\exp\left(-\frac{(x-x_p)^2}{2\sigma_p^2}\right), \nonumber \\
Q(x) &=&\frac{1}{\sqrt{2\pi}\sigma_q}\exp\left(-\frac{(x-x_q)^2}{2\sigma_q^2}\right) .\nonumber 
\end{eqnarray}

\begin{equation}
\zeta_{Gauss}(P,Q) =1-\log_2\Big[1+\frac{{2\sigma_p \sigma_q}}{\sigma_p^2+\sigma_q^2}   \exp\left(-\frac{(x_p-x_q)^2}{4(\sigma_p^2+\sigma_q^2)}  \right) \Big]. 
\end{equation}

\item {\bf Exponential} : $P(x)=\lambda_p e^{-\lambda_p x} $, $Q(x)=\lambda_q e^{-\lambda_q x} . $
\begin{equation}
\zeta_{exp}(P,Q)=-\log_2\left[\frac{(\sqrt{\lambda_p}+\sqrt{\lambda_q})^2}{2(\lambda_p+\lambda_q)} \right] .
\end{equation} 

\item {\bf Pareto} : Assuming the same cut off $x_m$,

\begin{equation}
P(x)=\begin{cases}
\alpha_p \frac{x_m^{\alpha_p}}{x^{\alpha_p+1}} & \text{for } x \ge x_m \\
0& \text{for} ~ x<x_m,
\end{cases}
\end{equation}

\begin{equation}
Q(x)=\begin{cases}
\alpha_q \frac{x_m^{\alpha_q}}{x^{\alpha_q+1}} & \text{for} ~~x\ge x_m\\
0& ~ \text{for } ~ x<x_m .
\end{cases}
\end{equation}

\begin{equation}
 \zeta_{pareto}(P,Q)=-\log_2\left[\frac{(\sqrt{\alpha_p}+\sqrt{\alpha_q})^2}{2(\alpha_p+\alpha_q)} \right].
\end{equation} 
\end{itemize}

\vfill
\end{document}